\documentclass[11pt]{article}

\usepackage[margin=1in]{geometry}
\usepackage{amsmath,amssymb,amsthm,mathtools,color}
\usepackage{enumitem}
\usepackage{hyperref}

\usepackage{tikz}
\usetikzlibrary{arrows.meta, positioning}

\newcommand{\E}{\mathbb{E}}
\newcommand{\Var}{\mathrm{Var}}
\newcommand{\Cov}{\mathrm{Cov}}

\newcommand{\Z}{\mathbb{Z}}
\newcommand{\R}{\mathbb{R}}
\newcommand{\N}{\mathbb{N}}

\newcommand{\Azero}{\textup{(\ref{ass:standing:stationarity})}}
\newcommand{\Aone}{\textup{(\ref{ass:standing:moments})}}
\newcommand{\Atwo}{\textup{(\ref{ass:standing:mean_birth_rates})}}
\newcommand{\Athree}{\textup{(\ref{ass:standing:linear_growth})}}

\theoremstyle{plain}
\newtheorem{theorem}{Theorem}
\newtheorem{lemma}{Lemma}
\newtheorem{proposition}{Proposition}
\newtheorem{corollary}{Corollary}

\theoremstyle{definition}
\newtheorem{definition}{Definition}

\newtheorem{example}{Example}
\newtheorem{assumption}{Assumption}[section]
\newtheorem{condition}{Condition}

\title{Noise Tradeoffs, Stationary Information Flow, and Structural Balance in Unit-Birth Networks}
\author{David F.~Anderson\thanks{Department of Mathematics, University of Wisconsin--Madison, Madison, WI, USA. Email: \texttt{anderson@math.wisc.edu}.}}
\date{\today}

\begin{document}
\maketitle

\begin{abstract}
In 2019, Paulsson and collaborators conjectured that stochastic biochemical
control networks exhibit fundamental limits on how much intrinsic noise can
be simultaneously suppressed across multiple components. While supported by
simulations and partial analytic results, a general proof remained
unavailable. Recently, Ripsman, Kell, and Hilfinger proposed a formal proof strategy
for unit-birth models based on a stationary information-theoretic decomposition. The purpose of this paper is
to provide the corresponding rigorous mathematical justification.

We consider continuous-time Markov chains on $\Z^N_{\ge 0}$
in which each component is degraded linearly and is produced in unit births
at a state-dependent rate that may depend on the other components, but not
directly on itself. Noise in component~$i$ is measured by the Fano factor
$F_{X_i}$, the ratio of stationary variance to stationary mean, with the
Poisson value~$1$ as the natural baseline. Our first contribution is to isolate explicit hypotheses on moments, mean birth
rates, and growth of the total jump rate under which the formal
information-flow identities can be rigorously justified on the infinite state
space.
 Under these hypotheses, and following
the proof outline of Ripsman, Kell, and Hilfinger, we prove that the resulting tradeoff
\[
\sum_{i=1}^N \frac{F_{X_i}-1}{\tau_i}\ge 0
\]
holds for \textit{any} network topology and \textit{any} allowable choice of rate functions.
Thus uniform sub-Poissonian noise suppression is impossible: if $F_{X_i} < 1$ for some $i$, then $F_{X_j} > 1$ for some $j \ne i$.

Our second contribution is to make these hypotheses checkable: a uniform
positive lower bound on the birth rates together with at-most-linear total
growth dominated by the weakest degradation rate suffices, which we prove with the use of a
Foster--Lyapunov function.

Our third contribution is a structural strengthening. Under a signed
monotonicity condition on the rate functions, satisfied in particular by
structurally balanced signed interaction networks, we prove that the stationary
distribution is associated with respect to the corresponding signed partial
order. This upgrades the global tradeoff to the termwise bound
$F_{X_i}\ge 1$ for every~$i$. Consequently, within the signed-monotone subclass, sub-Poissonian noise
requires a frustrated interaction topology.
\end{abstract}

\section{Introduction}

Stochastic reaction networks are a standard modeling framework for biochemical
systems in which molecular species are present in low copy numbers
\cite{gillespie1977exact, mcadams1997stochastic, thattai2001intrinsic,
elowitz2002stochastic, paulsson2004summing, raj2008nature}. In such systems,
individual production and degradation events can generate substantial
fluctuations in molecular abundances, and these fluctuations may propagate
through regulatory networks. Such fluctuations can affect cellular switching,
signaling, timing, and the reliability of downstream responses
\cite{mcadams1997stochastic,paulsson2004summing,raj2008nature}.

To discuss whether regulatory networks can suppress these fluctuations, one
first needs a baseline. For a single species produced at a constant rate and
degraded linearly, the stationary distribution is Poisson \cite{anderson2015stochastic}; in particular, its variance equals its mean. Thus the Fano factor
\[
F_X := \frac{\Var(X)}{\E[X]}
\]
equals one.  This Poisson value is the natural noise baseline for the models considered
here. Following the terminology used in the noise-suppression literature, we
will say that a component is sub-Poissonian if $F_X<1$ and super-Poissonian if
$F_X>1$; Yan, Hilfinger, Vinnicombe, and Paulsson, for example, refer to
components below the Poisson baseline as exhibiting sub-Poisson fluctuations
\cite{yan2019kinetic}.

Biological networks, however, are rarely collections of independent
birth-death processes. One species may regulate another through transcriptional
regulation, signaling pathways, sequestration, enzymatic activity, or other
interaction mechanisms
\cite{mcadams1997stochastic, shen2002network, tyson2003sniffers,
purvis2013encoding, buchler2009protein}. It is therefore natural to ask whether
a sufficiently well-designed regulatory network could suppress fluctuations in
many components simultaneously. For example, if $X_1$ happens to fluctuate
upward, one could imagine that this increases the production of a second
component $X_2$, which in turn represses the production of $X_1$ and thereby
pushes it back toward its mean. More complicated networks could use many such
interactions, and one might imagine that the collective effect is to reduce the
noise level of every component.

A natural question is whether such uniform suppression is possible. That is,
given the freedom to choose any topology and any nonlinear rate functions, can
one design an interaction network whose components all have Fano factors below
the Poisson baseline? Paulsson and collaborators conjectured that, for a broad
class of models, the answer is no \cite{yan2019kinetic}. Their conjecture
allows for various production mechanisms, including bursty production, while
restricting to linear degradation. The present paper treats the unit-birth
case, in which each production event increments a single component by one. In
biochemical terms, the variables $X_i$ may be interpreted as protein copy
numbers or molecular species abundances: each component is degraded linearly,
while its production rate may be regulated arbitrarily by the other
components, but not directly by itself. Thus the model excludes direct
self-feedback in production, while still allowing arbitrary indirect feedback
through the rest of the network.

The conjectured principle is striking because of its generality: it is not a
statement about a particular topology, a particular choice of rate functions,
or a perturbative regime. Rather, it says that no matter how the components
regulate one another's production rates, the network cannot push every
component below the Poisson noise baseline at stationarity. Equivalently, if
one component has $F_{X_i}<1$, then at least one other component must have
$F_{X_j}>1$. See Figure~\ref{fig:inaccessible_regions}(a) for a visual
representation in the two-component case.

\begin{figure}[h]
\centering
\begin{tikzpicture}[scale=2.0]

% =====================================
% Panel (a): general unit-birth network
% =====================================
\begin{scope}[xshift=0cm]

\fill[red!15] (0,0) rectangle (1,1);

\draw[->, thick] (0,0) -- (2.7,0) node[right] {$F_{X_1}$};
\draw[->, thick] (0,0) -- (0,2.7) node[above] {$F_{X_2}$};

\draw[thick, red] (1,0) -- (1,1) -- (0,1);

\draw[dashed, gray] (1,1) -- (1,2.5);
\draw[dashed, gray] (1,1) -- (2.5,1);

\fill[black] (1,1) circle (1pt);
\node[above right, font=\scriptsize] at (1,1) {$(1,1)$};

\node[below, font=\scriptsize] at (1,0) {$1$};
\node[left, font=\scriptsize] at (0,1) {$1$};

\node[align=center, font=\scriptsize] at (0.55,0.5) {Impossibility\\region};

\node[font=\small\bfseries] at (1.35,-0.4) {(a)};

\end{scope}

% =====================================
% Panel (b): structurally balanced network
% =====================================
\begin{scope}[xshift=4cm]

\fill[red!15] (0,0) -- (2.5,0) -- (2.5,1) -- (1,1) -- (1,2.5) -- (0,2.5) -- cycle;

\draw[->, thick] (0,0) -- (2.7,0) node[right] {$F_{X_1}$};
\draw[->, thick] (0,0) -- (0,2.7) node[above] {$F_{X_2}$};

\draw[thick, red] (1,2.5) -- (1,1) -- (2.5,1);

\fill[black] (1,1) circle (1pt);
\node[above right, font=\scriptsize] at (1,1) {$(1,1)$};

\node[below, font=\scriptsize] at (1,0) {$1$};
\node[left, font=\scriptsize] at (0,1) {$1$};

\node[align=center, font=\scriptsize] at (0.55,0.5) {Impossibility\\region};

\node[font=\small\bfseries] at (1.35,-0.4) {(b)};

\end{scope}

\end{tikzpicture}

\caption{Impossibility regions in $(F_{X_1}, F_{X_2})$ space for two-component
unit-birth networks.
(a) Illustrates the impossibility of simultaneous sub-Poissonian noise
suppression: for any network topology and any allowable rate functions, the
two Fano factors cannot both lie below the Poisson baseline $1$
(Theorem~\ref{thm:main}). The theorem in fact establishes the stronger
weighted tradeoff $(F_{X_1}-1)/\tau_1+(F_{X_2}-1)/\tau_2\ge 0$, which forbids
a half-plane bounded by a slanted line through $(1,1)$ whose slope depends on
the timescale ratio $\tau_1/\tau_2$; the unit square shown here is the
topology- and timescale-independent piece that holds for any choice of $\tau_i$.
Hilfinger et al.~\cite{RKH2025} plot the equivalent quantity
$\mathrm{CV}_i/\sqrt{1/\langle X_i\rangle}=\sqrt{F_{X_i}}$; the corresponding
regions are the same.
(b) Under the additional signed monotonicity hypothesis
(Assumption~\ref{ass:signed_monotonicity}), each Fano factor must individually
satisfy $F_{X_i}\ge 1$ (Theorem~\ref{thm:local_bound_signed_monotonicity}); the
impossibility region enlarges accordingly.}
\label{fig:inaccessible_regions}
\end{figure}

  Until recently, no general proof strategy existed for this conjecture, even for restricted settings.
A recent formal proof strategy of Ripsman, Kell, and Hilfinger
\cite{RKH2025} identifies the correct mechanism behind this tradeoff. Their
argument decomposes stationary information-theoretic quantities, including
mutual information rates, into directional information flows associated with
the different components. At a formal level, these flows satisfy a global
cancellation identity, and this cancellation combines with componentwise
information inequalities to yield the desired noise tradeoff.

One purpose of the present paper is to provide a rigorous mathematical
foundation for this argument in the unit-birth model. The main difficulty is
that the natural state space is infinite. Consequently, the entropy-like
quantities appearing in the information-flow decomposition involve logarithms
of stationary probabilities and conditional stationary probabilities, and the
formal manipulations require interchanging infinite sums, applying generators
to unbounded log-potentials, and justifying stationary generator identities. For example, the log-potential $g(x) = -\log\pi(x)$ is unbounded, and the stationary distribution $\pi$ enters our arguments only through the standing hypotheses rather than via any explicit formula, so the integrability of expressions involving $g$ requires careful verification.  %These are precisely the points where the formal information-flow calculation requires analytical verification. 

Our first contribution, therefore, is to isolate explicit hypotheses under which the
information-flow argument is valid. 
The most important moment assumption is a
finite second-moment condition, supplemented by natural regularity conditions
such as irreducible positive recurrence, full support of the stationary
distribution, finite positive mean birth rates, and mild total-rate growth.
Under these hypotheses we rigorously justify the formal identities used by
Ripsman, Kell, and Hilfinger. In particular, we prove the exact
Fano--covariance identity
\[
F_{X_i}
=
1+\frac{\Cov_\pi(f_i(X_{-i}),X_i)}{\E_\pi[f_i(X_{-i})]},
\]
where $f_i$ is the production rate of component $i$ and $X_{-i}$ denotes the vector $X$ with its $i$th coordinate removed, derive the componentwise information inequality, establish the stationary
global cancellation identity, and conclude the tradeoff
\[
\sum_{i=1}^N \frac{F_{X_i}-1}{\tau_i}\ge 0.
\]
This is the first impossibility result of the paper: within this framework,
uniform sub-Poissonian noise suppression is impossible. If one component has
sub-Poissonian noise, then the deficit must be balanced by super-Poissonian
noise in at least one other component.

Our second contribution is to make the hypotheses checkable. We give concrete
sufficient conditions on the birth-rate functions that imply the standing
assumptions used in the proof. In particular, a uniform positive lower bound on
the birth rates, together with at-most-linear growth of the total birth rate
with linear coefficient dominated by the weakest degradation rate, yields
nonexplosion, positive recurrence, finite moments, and the integrability needed
for the information-flow identities. This separates the abstract stationary
identities from the model-level conditions that guarantee they apply.

Our third contribution is a new structural theorem identifying a broad
topological regime in which the global tradeoff strengthens to a termwise bound.
Specifically, we show that for networks satisfying a signed monotonicity condition, the global inequality
upgrades to the following:
\[
F_{X_i}\ge 1
\qquad\text{for every }i.
\]
A broad graph-theoretic class satisfying this condition is given by
structurally balanced signed interaction networks, namely networks whose
signed interaction graph contains no cycle with an odd number of negative
edges. See Figure~\ref{fig:structural_balance_examples} for examples. In this case, a suitable signed order makes the stochastic dynamics
order-preserving. We prove that the stationary distribution is associated with
respect to this order, in the sense of Esary et al.~and of Lindqvist
\cite{esary1967association,lindqvist1988association},   and use the exact Fano--covariance identity to show that
each covariance
\[
\Cov_\pi(f_i(X_{-i}),X_i)
\]
is nonnegative. See Figure~\ref{fig:inaccessible_regions}(b) for the corresponding impossibility region.

\begin{figure}
\centering
\begin{tikzpicture}[
    node distance={2.3cm},
    main/.style = {draw, circle, thick},
    activation/.style = {-{Latex[length=2.5mm]}, thick, shorten >=2pt},
    inhibition/.style = {-{Bar[width=3mm]}, thick, shorten >=4pt}
]

% Panel (a): 3-node structurally balanced network
\begin{scope}[xshift=0cm]
\node[main] (1a) {$X_1$};
\node[main] (2a) [right of=1a] {$X_2$};
\node[main] (3a) [below of=2a] {$X_3$};

\draw[inhibition, red] (1a) -- node[midway, above] {$-$} (2a);
\draw[activation, blue] ([xshift=-4pt]2a.south) -- node[midway, left] {$+$} ([xshift=-4pt]3a.north);
\draw[activation, blue] ([xshift=4pt]3a.north) -- node[midway, right] {$+$} ([xshift=4pt]2a.south);
\draw[inhibition, red] (3a) -- node[midway, below left] {$-$} (1a);

\node[font=\small\bfseries] at (1.15, -3.2) {(a)};
\end{scope}

% Panel (b): repressilator
\begin{scope}[xshift=5cm]
\node[main] (1b) {$X_1$};
\node[main] (2b) [right of=1b] {$X_2$};
\node[main] (3b) [below of=2b] {$X_3$};

\draw[inhibition, red] (1b) -- node[midway, above] {$-$} (2b);
\draw[inhibition, red] (2b) -- node[midway, right] {$-$} (3b);
\draw[inhibition, red] (3b) -- node[midway, below left] {$-$} (1b);

\node[font=\small\bfseries] at (1.15, -3.2) {(b)};
\end{scope}

\end{tikzpicture}

\caption{Examples of signed interaction graphs for $N=3$ component networks.
Activating edges are drawn in blue with an arrowhead and labeled $+$;
inhibiting edges are drawn in red with a T-bar terminator and labeled $-$.
(a) A structurally balanced network: every cycle contains an even number of negative edges and Theorem~\ref{thm:local_bound_signed_monotonicity} gives the
termwise bound $F_{X_i}\ge 1$ for every $i$.
(b) The repressilator of Elowitz and Leibler~\cite{elowitz2000synthetic}, in
which each species represses the next. The signed interaction graph is a
directed $3$-cycle with three negative edges; it is not structurally balanced,
and the signed monotonicity hypothesis fails.}
\label{fig:structural_balance_examples}
\end{figure}

This gives a \textit{second} impossibility result, strictly stronger than the first: in a structurally balanced
network, even componentwise sub-Poissonian noise suppression is impossible.  
Thus, within the
class of models whose interactions admit a global sign description, any attempt
to construct a component with $F_{X_i}<1$ must use a frustrated interaction
structure, meaning a signed interaction graph that is not structurally
balanced. In this sense, frustration is not merely one way to obtain noise
suppression; it is a necessary mechanism for sub-Poissonian noise in this
class. This places a hard structural limit on what regulatory architecture can
achieve.

The remainder of the paper is organized as follows.
Section~\ref{sec:model_hypotheses_statement} introduces the unit-birth model,
the standing hypotheses, and the main tradeoff theorem, and then gives
checkable sufficient conditions under which those hypotheses hold.
Section~\ref{sec:technical_lemmas} proves the technical generator and
stationary identities needed for the information-flow calculation. Some of
these are general countable-state Markov chain tools, while the final subsection
applies them to the unit-birth logarithmic potentials, information flows, and
marginal flux balance. Section~\ref{sec:proof}
provides the proof of the main result. Specifically, it establishes the
componentwise information inequalities, the cancellation of stationary
information flows, and the resulting global Fano-factor tradeoff.
Section~\ref{sec:structural_balance} proves the termwise bound under signed
monotonicity and relates this condition to structurally balanced interaction
graphs and monotone-systems theory.

\section{The unit-birth model: hypotheses, main theorem, and sufficient conditions}
\label{sec:model_hypotheses_statement}
In this section we introduce the model, fix notation, and state the standing
hypotheses under which the stationary identities below are justified. We then
state the main ``noise tradeoff'' theorem for unit-birth models, and give a concrete sufficient condition showing
that the standing hypotheses hold for a broad class of unit-birth models.

\subsection{The unit-birth model}

Before defining the model, we fix a few pieces of notation. For a
continuous-time Markov chain on a countable state space $S$, we write $q(x,y)$
for the transition rate from $x$ to $y$. For $x\in\R^N$, we write $x_i$ for
the $i$th coordinate and $x_{-i}$ for the vector obtained by deleting the
$i$th coordinate.

We will require that any model found in this paper satisfies the following regularity conditions that can be found as Condition 1 in \cite{anderson2018non}.

\begin{condition}
\label{condition:regularity_minimal}
We require the following.  
\begin{itemize}
\item For $x,y\in S$ with $x\ne y$, we have $0\le q(x,y)< \infty$.
\item For each $y\in S$, we have $\sum_{x\in S\setminus\{y\}} q(y,x) < \infty$.
\item For each $x \in S$, we have $\sup_{y\in S \setminus\{x\}} q(y,x) < \infty$.
\end{itemize}
\end{condition}

We now turn to our model of interest.
Fix an integer $N\ge 2$.  Let $X$ be a continuous-time Markov chain on
\[
S:=\Z_{\ge 0}^N
\]
with generator $L$ acting on functions $\varphi:S\to\R$ by
\begin{equation}\label{eq:generator}
(L\varphi)(x)
:=\sum_{i=1}^N f_i(x_{-i})\big(\varphi(x+e_i)-\varphi(x)\big)
+\sum_{i=1}^N \frac{x_i}{\tau_i}\big(\varphi(x-e_i)-\varphi(x)\big),
\end{equation}
where $\tau_i>0$, and $e_i$ is the $i$th coordinate vector, and with the convention that the death summand is omitted when $x_i=0$.
Thus coordinate $i$ undergoes births at rate $f_i(x_{-i})$ (depending only on the other coordinates)
and deaths at rate $x_i/\tau_i$, with $f_i: \Z_{\ge 0}^{N-1} \to \mathbb{R}_{\ge 0}$.  Because the model has only single-coordinate birth and death transitions,
Condition~\ref{condition:regularity_minimal} is automatic whenever the rates
$f_i(v)$ are finite.
For later use, we write
\[
L=\sum_{i=1}^N L_i,
\]
where
\begin{equation}\label{eq:Li_def}
(L_i\varphi)(x)
:=
f_i(x_{-i})\big(\varphi(x+e_i)-\varphi(x)\big)
+
\frac{x_i}{\tau_i}\big(\varphi(x-e_i)-\varphi(x)\big),
\end{equation}
with the death term omitted when $x_i=0$.

For a generic Markov chain with transition rates  $q(x,y)$, we will denote the total jump rate out of state $x$   as
\begin{equation}\label{eq:Lambda_def}
\Lambda(x):=\sum_{y\neq x} q(x,y).
\end{equation}
Hence, for our model defined above, we have
\begin{equation*}
\Lambda(x)=\sum_{i=1}^N f_i(x_{-i})+\sum_{i=1}^N \frac{x_i}{\tau_i}.
\end{equation*}
Also define
\begin{equation}
\label{eq:linear_s}
s(x):=\|x\|_1=\sum_{j=1}^N x_j.
\end{equation}

Let $\pi$ denote a stationary distribution for $X$ (when it exists), and write $\E_\pi[\cdot]$ for expectation under $\pi$.
For a fixed $i\in \{1,\dots, N\}$,  we will often have need to separate out the $i$th component.  We therefore perform the following slight abuse of notation.  For a given $x\in \Z^N_{\ge 0}$ with $x_i = n$ and $x_{-i} = v$, we will often write $\pi(x) = \pi(n,v)$.  The meaning should always be clear in context.

Let $\pi_i$ denote the marginal law of $X_i$ and, for each $n$ with $\pi_i(n)>0$, let $\pi_n^i$ denote the conditional law of $X_{-i}$ given $X_i=n$:
\begin{align}
\label{eq:pi_marginals}
\pi_i(n):=\sum_{v\in \Z^{N-1}_{\ge 0} } \pi(n,v),
\qquad
\pi_n^i(v):=P_\pi(X_{-i}=v\mid X_i=n)=\frac{\pi(n,v)}{\pi_i(n)}.
\end{align}
We will also use the logarithmic potentials
\begin{equation}
\label{eq:log_potentials}
g(n,v):=-\log\pi(n,v),
\qquad
g_i(n,v):=-\log\pi_i(n),
\qquad
h_i(n,v):=-\log\pi_n^i(v).
\end{equation}
Under Assumption~\ref{ass:standing}, the stationary distribution has full
support, and hence these quantities are finite for every $(n,v)\in S$.
Moreover, since
\[
\pi(n,v)=\pi_i(n)\pi_n^i(v),
\]
we have, for each $i \in \{1,\dots,N\}$, the decomposition
\begin{equation}\label{eq:g_decomp}
g(n,v)=g_i(n,v)+h_i(n,v).
\end{equation}
Define
\begin{align}
\label{eq:mean_terms}
m_i(n):=\E_{\pi_n^i}[f_i(X_{-i})],
\qquad
\mu_i:=\E_\pi[f_i(X_{-i})]=\E_\pi[m_i(X_i)].
\end{align}
Finally, define the Fano factor
\[
F_{X_i}:=\frac{\Var_\pi(X_i)}{\E_\pi[X_i]}.
\]

\subsection{Standing hypotheses}

The following standing hypotheses will be used throughout the paper. They give
a convenient rigorous framework in which the stationary identities and 
information-flow decompositions developed by Hilfinger et al.~\cite{RKH2025} can be proved. The hypotheses are tailored to the unit-birth setting, but several of the
technical lemmas below are stated in a form that applies more generally to
countable-state continuous-time Markov chains satisfying the corresponding
regularity and integrability conditions.

\begin{assumption}[Standing hypotheses]\label{ass:standing}
Assuming Condition \ref{condition:regularity_minimal} is satisfied, we also assume the following: 
\begin{enumerate}[label=\textup{(A\arabic*)}, ref=A\arabic*, start=0, leftmargin=3em]
\item \label{ass:standing:stationarity}
\emph{Irreducibility and stationarity.}
The CTMC $X$ is irreducible on $S$ and admits a stationary distribution $\pi$.

\item \label{ass:standing:moments}
\emph{Finite second moments.}
$\E_\pi[X_i^2]<\infty$ for each $i$. Equivalently, $\E_\pi[s(X)^2]<\infty$.

\item \label{ass:standing:mean_birth_rates}
\emph{Finite, positive mean birth rates.}
For each $i$,
\[
\mu_i=\E_\pi[f_i(X_{-i})]\in(0,\infty).
\]

\item \label{ass:standing:linear_growth}
\emph{At-most-linear total rate growth.}
There exists a constant $C_\Lambda<\infty$ such that
\[
\Lambda(x)\le C_\Lambda(1+s(x))
\qquad\forall x\in S.
\]
\end{enumerate}
\end{assumption}

Two simple consequences of Assumption~\ref{ass:standing} will be used
repeatedly below. First, since the chain is irreducible and admits a stationary
distribution under \Azero, the stationary distribution has full support:
$\pi(x)>0$ for every $x\in S$. Thus the conditional laws and logarithmic
potentials appearing below are well defined. Second, although nonexplosion is
not included as a standing hypothesis, it follows from \Azero, \Aone, and
\Athree by Lemma~\ref{lem:nonexplosive_from_moments}.

%----------------------------
\subsection{Statement of the main theorem}
\label{sec:statement_main_theorem}
%----------------------------

We now state the main theorem. Recall the marginal law $\pi_i$, the
conditional laws $\pi_n^i$, the mean birth rates $\mu_i$, and the Fano factors
$F_{X_i}$ defined above in \eqref{eq:pi_marginals} and \eqref{eq:mean_terms}. By the full-support consequence of \Azero, discussed
above, $\pi_i(n)>0$ for every $n\ge0$, so the conditional laws $\pi_n^i$ are
well defined for every $n\ge0$. When focusing on coordinate $i$, we continue
to write $x=(n,v)$, where $n=x_i$ and $v=x_{-i}$.

Following the probability-current formulation of Ripsman, Kell, and Hilfinger \cite{RKH2025},
we define the stationary probability current across the $i$-birth edge
$(n,v)\to(n+1,v)$ by
\begin{equation}\label{eq:fiber-current}
J_i(n+1,n;v):=\pi(n,v)f_i(v)-\pi(n+1,v)\frac{n+1}{\tau_i}.
\end{equation}
Note that if the chain satisfies detailed balance, then each $J_i$ vanishes.  Although we do not assume detailed balance, stationarity does imply a weaker
marginal balance across each adjacent pair of levels of $X_i$. Namely,
\[
\sum_v J_i(n+1,n;v)=0.
\]
Equivalently,
\begin{equation}
\label{eq:marginal_flux_balance_preview}
\pi_i(n)m_i(n)=\frac{n+1}{\tau_i}\pi_i(n+1).
\end{equation}
Thus the net probability flux between the marginal levels $X_i=n$ and
$X_i=n+1$ vanishes, even though the individual conditional fluxes
$J_i(n+1,n;v)$ need not vanish. The  marginal balance \eqref{eq:marginal_flux_balance_preview} will be proved in Lemma \ref{lem:flux} below and will be used in the proof of part (3) of Theorem \ref{thm:main} below.

Also following \cite{RKH2025}, we define the single information flow as follows.
\begin{definition}[Single information flow]\label{def:Idot_clean}
Assume that the series below is absolutely convergent. Define
\begin{align}\label{eq:Idot_clean_def}
\begin{split}
(\dot I)_{X_i}
&:=\sum_{n\ge0}\sum_{v\in\Z_{\ge0}^{N-1}}
J_i(n+1,n;v)\,\log\frac{\pi_{n+1}^i(v)}{\pi_n^i(v)}\\
&=
\sum_{n\ge0}\sum_v
\left(
\pi(n,v)f_i(v)-\pi(n+1,v)\frac{n+1}{\tau_i}
\right)
\big(h_i(n,v)-h_i(n+1,v)\big),
\end{split}
\end{align}
where the second equality utilizes the notation from \eqref{eq:log_potentials}.
\end{definition}

Lemma \ref{lem:Idot_well_def_and_rep} below shows that under Assumption~\ref{ass:standing} the series is indeed absolutely convergent,
and gives a useful representation.

We now present the main theorem for the unit-birth model
\eqref{eq:generator}.

\begin{theorem}[Noise tradeoff for the unit-birth model]\label{thm:main}
Consider the unit-birth CTMC on $S=\Z_{\ge0}^N$ with generator
\eqref{eq:generator}. Suppose Assumption~\ref{ass:standing} holds, and let
$\pi$ be its stationary distribution. 
Then the following statements hold.
\begin{enumerate}[label=\textup{(\arabic*)}, leftmargin=2.2em]
\item For each $i$,
\begin{equation}\label{eq:mean_identity_thm}
\E_\pi[X_i]=\tau_i\,\mu_i,
\end{equation}
and
\begin{equation}\label{eq:fano_cov_identity_thm}
F_{X_i}
=1+\frac{\Cov_\pi\!\big(f_i(X_{-i}),X_i\big)}{\mu_i}.
\end{equation}

\item The single information flow $(\dot I)_{X_i}$ defined in
\eqref{eq:Idot_clean_def} is well defined, and for each $i$,
\begin{equation}\label{eq:single_information_ineq_thm}
\frac{F_{X_i}-1}{\tau_i}\ge (\dot I)_{X_i}.
\end{equation}

\item We have zero total information flow:
\begin{equation}\label{eq:zero_total_information_flow_thm}
\sum_{i=1}^N (\dot I)_{X_i}=0.
\end{equation}
\end{enumerate}
Consequently, we have the global tradeoff
\begin{equation}\label{eq:tradeoff_thm}
\sum_{i=1}^N \frac{F_{X_i}-1}{\tau_i}
\ge
\sum_{i=1}^N (\dot I)_{X_i}
=0.
\end{equation}
In particular, it is impossible to have $F_{X_i}\le 1$ for all $i$ with strict
inequality for at least one index: if $F_{X_k}<1$ for some $k$, then there
exists $j\neq k$ such that $F_{X_j}>1$.
\end{theorem}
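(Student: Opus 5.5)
The three parts will be proved in order, with the final tradeoff following immediately. For part (1), I will apply the stationary identity $\E_\pi[L\varphi]=0$ to $\varphi(x)=x_i$ and $\varphi(x)=x_i^2$. This is justified by the technical stationary-generator lemmas of Section~\ref{sec:technical_lemmas}: since $f_i\le \Lambda\le C_\Lambda(1+s)$ by \Athree, we have $|L\varphi|\lesssim (1+s)^2$, which is integrable by \Aone.
\begin{itemize}
\item With $\varphi=x_i$ this gives $\mu_i-\E_\pi[X_i]/\tau_i=0$, which is \eqref{eq:mean_identity_thm}.
\item With $\varphi=x_i^2$, since $L x_i^2=f_i(x_{-i})(2x_i+1)+\frac{x_i}{\tau_i}(1-2x_i)$, we get $\E_\pi[X_i^2]/\tau_i=\E_\pi[f_iX_i]+\mu_i$.
\end{itemize}
Subtracting $\tau_i\mu_i^2=\E_\pi[X_i]^2/\tau_i$ gives $\Var_\pi(X_i)/\tau_i=\Cov_\pi(f_i,X_i)+\mu_i$. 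Dividing by $\mu_i=\E_\pi[X_i]/\tau_i$ gives \eqref{eq:fano_cov_identity_thm}.

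For part (2), well-definedness of $(\dot I)_{X_i}$ is Lemma~\ref{lem:Idot_well_def_and_rep}, and it allows me to split the series into its birth and death parts. In the death part I will use marginal flux balance (Lemma~\ref{lem:flux}), $\pi_i(n+1)\frac{n+1}{\tau_i}=\pi_i(n)m_i(n)$. The death part then becomes $-\sum_n\pi_i(n)m_i(n)\,\mathrm{KL}(\pi^i_{n+1}\|\pi^i_n)\le 0$, and I drop it.

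The birth part is $\sum_n\pi_i(n)\sum_v\pi^i_n(v)f_i(v)\log(\pi^i_{n+1}(v)/\pi^i_n(v))$. Here I use $\log a\le c a-1-\log c$ with the optimal choice $c=m_i(n)/m_i(n+1)$. Note that $m_i(n)>0$ for all $n$, since otherwise marginal balance would force $\pi_i(n+1)=0$, contradicting full support. This bounds the birth part by $\sum_n\pi_i(n)m_i(n)\log\frac{m_i(n+1)}{m_i(n)}$.

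Applying marginal balance again rewrites this as $\frac1{\tau_i}\big(\E[X_i\log m_i(X_i)]-\E[X_i\log m_i(X_i-1)]\big)$. The marginal balance also yields the Stein-type identity $\E[X_i\phi(X_i-1)]=\tau_i\E[m_i(X_i)\phi(X_i)]$, so the bound equals
\[
\frac1{\tau_i}\Big(\E[X_i\log m_i(X_i)]-\tau_i\E[m_i\log m_i]\Big).
\]
Two convexity bounds then finish part (2).
\begin{itemize}
\item Since $X_i\ge0$, I use $\log m\le\log\mu_i+(m-\mu_i)/\mu_i$ in the first term.
\item In the second term I use $m\log m\ge m\log\mu_i+(m-\mu_i)$.
\end{itemize}
Together with $\E X_i=\tau_i\mu_i$ and $\E m_i=\mu_i$, the $\log\mu_i$ terms cancel. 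What remains is $\Cov(X_i,m_i(X_i))/(\tau_i\mu_i)=\Cov_\pi(f_i,X_i)/(\tau_i\mu_i)$, which equals $(F_{X_i}-1)/\tau_i$ by part (1).

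The integrability of these manipulations is the delicate point. It includes the finiteness of $\E[X_i|\log m_i(X_i)|]$, the KL terms, and the interchange of $\sum_n$ and $\sum_v$. I expect to handle it with the representation from Lemma~\ref{lem:Idot_well_def_and_rep} and the bound $m_i(n)\le C_\Lambda(1+n+\E_{\pi^i_n}s)$. I may also need to truncate in $n$ and pass to the limit by monotone convergence, since the inequality steps produce terms of a fixed sign.

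Part (3) is the main obstacle. Using $h_i=g-g_i$, I will write $(\dot I)_{X_i}=\sum J_i\,(g(n,v)-g(n+1,v))-\sum J_i\,(g_i(n)-g_i(n+1))$. The second sum vanishes by marginal balance, since $g_i$ is constant on fibers. Summing over $i$, the first sums combine, by a discrete summation by parts over all edges, into $\E_\pi[Lg]$ for the unbounded potential $g=-\log\pi$. The formal identity is $\sum_x\pi(x)(Lg)(x)=-\sum_{x,y}\pi(x)q(x,y)\log\frac{\pi(y)}{\pi(x)}$, which vanishes by stationarity (it is the entropy-production-type identity $\sum_y(\pi q)(y)\log\pi(y)=0$). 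Justifying this rigorously requires the lemma of Section~\ref{sec:technical_lemmas} that gives $\E_\pi[Lg]=0$ for the log-potential, using \Athree and the absolute convergence already established. This is where I would lean hardest on the earlier technical lemmas.

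Finally, summing the inequality of part (2) over $i$ and using part (3) gives \eqref{eq:tradeoff_thm}. The impossibility statement follows because every $\tau_i>0$.
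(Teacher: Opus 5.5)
Your proposal is correct and follows the paper's proof essentially step for step. Part (1) uses the same generator identities for $x_i$ and $x_i^2$; part (2) uses the same KL/log-sum argument with the same elementary logarithm bounds and marginal flux balance; part (3) uses the same decomposition $g=g_i+h_i$, killing the $g_i$ piece by marginal balance and the $g$ piece by $\E_\pi[(Lg)(X)]=0$. The only difference is the order of steps in part (2): the paper applies the logarithm bounds termwise in $n$ before using marginal balance to get $\sum_n\pi_i(n)m_i(n)m_i(n+1)=\tau_i^{-1}\E_\pi[X_i m_i(X_i)]$, which avoids ever needing the finiteness of $\E_\pi[X_i|\log m_i(X_i)|]$ that you flagged as delicate.
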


\subsection{Sufficient conditions for Assumption \ref{ass:standing}}

We record a concrete
sufficient condition for Assumption~\ref{ass:standing}. The class is broad:
the individual birth-rate functions may be state dependent and unbounded, but
their total growth must be at most linear and not strong enough to overwhelm
the weakest linear death rate. The uniform lower bound ensures that all upward
moves remain accessible.

\begin{theorem}[Lower bounds and controlled growth imply Assumption~\ref{ass:standing}]
\label{thm:growth_f_implies_assumption}
Consider the continuous time Markov chain on $S=\Z_{\ge0}^N$ with generator
\eqref{eq:generator}. Assume there exists $\varepsilon\in(0,1)$ such that
\begin{equation}\label{eq:f_lower_bound}
\varepsilon \ \le\ f_i(v)\qquad\text{for all }i\in\{1,\dots,N\}\text{ and all }v\in\Z_{\ge0}^{N-1}.
\end{equation}
Assume further that there exist constants $A,B\in[0,\infty)$ such that, for all $x\in S$,
\begin{equation}\label{eq:total_birth_linear_growth}
\sum_{i=1}^N f_i(x_{-i}) \ \le\ A + B\,s(x),
\qquad s(x) = x_1 + \cdots + x_N,
\end{equation}
and that
\begin{equation}\label{eq:drift_gap_condition}
B \ <\ \frac{1}{\tau_{\max}},
\qquad \tau_{\max}:=\max_{1\le i\le N}\tau_i.
\end{equation}
Then Condition~\ref{condition:regularity_minimal} holds, and the corresponding
CTMC satisfies Assumption~\ref{ass:standing}.
\end{theorem}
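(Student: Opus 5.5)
Condition~\ref{condition:regularity_minimal} is immediate: the lower bound and \eqref{eq:total_birth_linear_growth} make every $f_i(v)$ finite, and the model has only single-coordinate jumps. For the rest, the plan is a Foster--Lyapunov argument with polynomial test functions $V_k(x)=(1+s(x))^k$ for $k=1,2,3$, together with a direct check of irreducibility.

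\textbf{Irreducibility.} Because $f_i\ge\varepsilon>0$, every upward move $x\to x+e_i$ has positive rate. Every downward move $x\to x-e_i$ with $x_i\ge1$ has rate $x_i/\tau_i>0$. Hence any state reaches any other in finitely many unit steps, and the chain is irreducible on $S$.

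\textbf{Drift of $s$.} Write $\delta:=1/\tau_{\max}-B>0$. Using $\sum_i x_i/\tau_i\ge s(x)/\tau_{\max}$ and \eqref{eq:total_birth_linear_growth},
\[
Ls(x)=\sum_i f_i(x_{-i})-\sum_i \frac{x_i}{\tau_i}\le A-\delta\, s(x).
\]

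\textbf{Drift of $V_k$.} Each jump changes $s$ by $\pm1$. A Taylor expansion of $(1+s\pm1)^k$ gives
\[
LV_k=k(1+s)^{k-1}Ls+O\big(\Lambda(x)(1+s)^{k-2}\big).
\]
Since $\Lambda(x)\le A+(B+1/\tau_{\min})s(x)$, the error term is $O((1+s)^{k-1})$. Therefore
\[
LV_k\le -c_k V_k+d_k
\]
for some constants $c_k>0$ and $d_k<\infty$.

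\textbf{Nonexplosion and positive recurrence.} The case $k=1$ gives nonexplosion by the standard criterion, since $V_1\to\infty$ and $LV_1\le d_1$. The sublevel sets of $V_1$ are finite, so the same drift condition with an irreducible chain gives positive recurrence and hence a stationary distribution $\pi$. This proves \Azero.

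\textbf{Moments.} Next use the Foster--Lyapunov moment bound: if $LV\le -cW+d$ with $V\ge0$, then $\E_\pi[W]\le d/c$. Applying it with $V=W=V_k$ gives $\E_\pi[V_k]<\infty$; with $k=2$ this is \Aone. This step is the main obstacle. The naive identity $\E_\pi[LV]=0$ requires $LV\in L^1(\pi)$, which is not known in advance. I would avoid it by citing the comparison theorem of Meyn--Tweedie for continuous-time chains, which yields $\pi(W)\le d/c$ directly from the drift inequality. Alternatively, I would localize with stopping times $T_M=\inf\{t:s(X_t)\ge M\}$, apply Dynkin's formula to the stopped process, and combine Fatou's lemma with the ergodic theorem to conclude $\E_\pi[W]\le d/c$.

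\textbf{Mean birth rates and total rate.} For \Atwo, the bound $\varepsilon\le f_i\le A+Bs$ together with $\E_\pi[s]<\infty$ gives $\mu_i\in[\varepsilon,\infty)$. For \Athree, $\Lambda(x)\le A+(B+1/\tau_{\min})s(x)$, so one may take $C_\Lambda=\max(A,\,B+1/\tau_{\min})$.
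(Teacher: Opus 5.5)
Your proposal is correct and follows essentially the same route as the paper. Both arguments get irreducibility from the lower bound and use a Foster--Lyapunov drift inequality for $(1+s(x))^2$ to obtain nonexplosion, positive recurrence and $\E_\pi[(1+s(X))^2]<\infty$; the paper computes $LV$ exactly and cites Down--Meyn--Tweedie, where you use a Taylor expansion and the comparison theorem or a localization argument, and both finish with the same direct checks of \Atwo{} and \Athree{}. Your extra test functions $V_1$ and $V_3$ are harmless but unnecessary, since $V_2$ alone already delivers all of \Azero{} and \Aone{}.
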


\begin{proof}

First note that Condition~\ref{condition:regularity_minimal} holds. The rates
$q(x,y)$ are finite because the birth rates are nonnegative and satisfy
\[
\sum_{i=1}^N f_i(x_{-i})\le A+B s(x)<\infty,
\]
and the death rates $x_i/\tau_i$ are finite. The total rate out of $x$ is
finite for the same reason. Finally, for fixed $x$, there are at most $2N$
states $y$ with $q(y,x)>0$, namely $y=x-e_i$ or $y=x+e_i$ when these states
belong to $S$, and each corresponding rate is finite. Hence
\[
\sup_{y\neq x}q(y,x)<\infty.
\]
Thus Condition~\ref{condition:regularity_minimal} is satisfied.

We now verify items \Azero-\Athree of
Assumption~\ref{ass:standing}.

\smallskip\noindent
\emph{\Azero Irreducibility and stationarity, and
\Aone finite second moments.}

By \eqref{eq:f_lower_bound}, from any state $x$ and for every coordinate $i$,
the birth transition $x\to x+e_i$ occurs at strictly positive rate. Also,
whenever $x_i>0$, the death transition $x\to x-e_i$ occurs at rate
$x_i/\tau_i>0$. Hence any two states $x,y\in S$ can be connected by a finite
sequence of $\pm e_i$ moves, each having strictly positive rate. Therefore the
chain is irreducible on $S$.

Let
\[
V(x):=(1+s(x))^2.
\]
A birth increases $s$ by $1$, so it changes $V$ by
\[
(1+s(x)+1)^2-(1+s(x))^2=2s(x)+3.
\]
A death decreases $s$ by $1$, so it changes $V$ by
\[
(1+s(x)-1)^2-(1+s(x))^2=-2s(x)-1.
\]
Therefore
\begin{align*}
(LV)(x)
&=
(2s(x)+3)\sum_{i=1}^N f_i(x_{-i})
-
(2s(x)+1)\sum_{i=1}^N\frac{x_i}{\tau_i}.
\end{align*}
Using \eqref{eq:total_birth_linear_growth} and
\[
\sum_{i=1}^N\frac{x_i}{\tau_i}\ge \frac{s(x)}{\tau_{\max}},
\]
we get
\begin{align*}
(LV)(x)
&\le
(2s(x)+3)(A+B s(x))
-
(2s(x)+1)\frac{s(x)}{\tau_{\max}}\\
&=
\Bigl(2B-\frac{2}{\tau_{\max}}\Bigr)s(x)^2
+
\Bigl(2A+3B-\frac{1}{\tau_{\max}}\Bigr)s(x)
+
3A.
\end{align*}
By \eqref{eq:drift_gap_condition}, the coefficient of $s(x)^2$ is strictly
negative. Since $V(x)=(1+s(x))^2$, there exist constants $c>0$, $b<\infty$,
and $R<\infty$ such that
\[
(LV)(x)\le -c\,V(x)+b\,\mathbf 1_{\{s\le R\}}(x)
\qquad\forall x\in S.
\]
The set $\{x\in S:s(x)\le R\}$ is finite. Hence the standard
Foster--Lyapunov criterion for countable-state CTMCs (see \cite{DownMeynTweedie1995}) implies nonexplosion and
positive recurrence, and therefore existence of a stationary distribution
$\pi$; moreover,
\[
\E_\pi[V(X)]<\infty.
\]
Together with the irreducibility proved above, this proves \Azero.
Since $V(X)=(1+s(X))^2$, it also gives
\[
\E_\pi[s(X)^2]<\infty,
\]
which is equivalent to \Aone.

\smallskip\noindent
\emph{\Atwo Finite, positive mean birth rates.}
The lower bound \eqref{eq:f_lower_bound} gives
\[
\mu_i=\E_\pi[f_i(X_{-i})]\ge \varepsilon>0.
\]
For finiteness, \eqref{eq:total_birth_linear_growth} implies
\[
f_i(x_{-i})
\le
\sum_{j=1}^N f_j(x_{-j})
\le
A+B\,s(x).
\]
Therefore
\[
\mu_i
\le
A+B\,\E_\pi[s(X)]<\infty,
\]
where $\E_\pi[s(X)]<\infty$ follows from \Aone. Thus
\[
\mu_i\in(0,\infty)
\]
for each $i$, proving \Atwo.

\smallskip\noindent
\emph{\Athree At-most-linear total rate growth.}
For $x\in S$, the total transition rate is
\[
\Lambda(x)
=
\sum_{i=1}^N f_i(x_{-i})
+
\sum_{i=1}^N \frac{x_i}{\tau_i}.
\]
Using \eqref{eq:total_birth_linear_growth} and
\[
\sum_{i=1}^N\frac{x_i}{\tau_i}\le \frac{s(x)}{\tau_{\min}},
\qquad
\tau_{\min}:=\min_{1\le i\le N}\tau_i,
\]
we obtain
\[
\Lambda(x)
\le
A+B\,s(x)+\frac{s(x)}{\tau_{\min}}
\le
C_\Lambda(1+s(x)),
\]
for example with $C_\Lambda:=A+B+\tau_{\min}^{-1}$. This proves
\Athree.

Thus \Azero--\Athree hold, and so Assumption~\ref{ass:standing}
holds.
\end{proof}

%--------------------------------------------------------------
\section{Stationary  Identities and Logarithmic Integrability}
\label{sec:technical_lemmas}
%--------------------------------------------------------------

The proof of the main theorem relies on stationary generator identities for
unbounded logarithmic test functions. This section collects the estimates
needed to justify those identities on the infinite state space. We first record
general stationarity tools for countable-state continuous-time Markov chains,
and then apply them to the logarithmic potentials introduced above.

We first record that Assumption~\ref{ass:standing} gives the integrability of
the total exit rate needed below.

\begin{lemma}[$\Lambda$--integrability and nonexplosion]
\label{lem:nonexplosive_from_moments}
Consider the unit-birth CTMC on $S=\Z_{\ge0}^N$ with transition rates
$q(x,y)$ and total exit rate
\[
\Lambda(x):=\sum_{y\neq x}q(x,y).
\]
Suppose Assumption~\ref{ass:standing} holds. Then
\begin{equation}\label{eq:Lambda_pi_integrable}
\sum_{x\in S}\pi(x)\,\Lambda(x)=\E_\pi[\Lambda(X)]<\infty.
\end{equation}
In particular, by \cite[Theorem~2]{anderson2018non}, the CTMC is nonexplosive.
\end{lemma}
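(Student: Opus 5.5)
The plan is to combine the growth bound \Athree{} with the moment bound \Aone{} to get \eqref{eq:Lambda_pi_integrable}. Nonexplosion will then follow directly from the cited criterion of \cite{anderson2018non}.

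\emph{Integrability.} By \Athree{} there is a constant $C_\Lambda<\infty$ with $\Lambda(x)\le C_\Lambda(1+s(x))$ for all $x\in S$. Since $\Lambda\ge 0$, we may sum termwise against $\pi$:
\[
\E_\pi[\Lambda(X)]=\sum_{x\in S}\pi(x)\Lambda(x)\le C_\Lambda\bigl(1+\E_\pi[s(X)]\bigr).
\]
\Aone{} gives $\E_\pi[s(X)^2]<\infty$. By Cauchy--Schwarz (or Jensen), $\E_\pi[s(X)]\le \E_\pi[s(X)^2]^{1/2}<\infty$. Hence \eqref{eq:Lambda_pi_integrable} holds. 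This is a sum of nonnegative terms, so no interchange issues arise.

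\emph{Nonexplosion.} Here I invoke \cite[Theorem~2]{anderson2018non}. As I recall it, that result says the following: if a CTMC on a countable space satisfies Condition~\ref{condition:regularity_minimal} and admits an invariant probability measure $\pi$ with $\sum_x\pi(x)\Lambda(x)<\infty$, then the chain is nonexplosive. The checks are short. Condition~\ref{condition:regularity_minimal} is assumed in Assumption~\ref{ass:standing}; for this model it is automatic, since each state has at most $2N$ neighbours with finite rates. \Azero{} supplies the stationary distribution $\pi$. Here $\pi$ should be understood as an invariant measure in the algebraic sense, $\sum_x\pi(x)q(x,y)=\pi(y)\Lambda(y)$, which is how stationarity is meant before nonexplosion is known. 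Integrability was shown in the previous step.

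The only delicate point is this last one. Theorem~2 of \cite{anderson2018non} is phrased in terms of a solution of the balance equations $\pi Q=0$ rather than a distribution stationary for the (possibly explosive) process, so I will make sure the stationary distribution in \Azero{} is read that way. If it is instead defined via the process, then stationarity for the minimal process implies $\pi Q=0$ under Condition~\ref{condition:regularity_minimal}, by the standard Kolmogorov backward/forward argument, so in either reading the hypotheses of the cited theorem are met.
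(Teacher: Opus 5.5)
Your proposal is correct and follows the paper's proof essentially verbatim: you bound $\E_\pi[\Lambda(X)]\le C_\Lambda\bigl(1+\E_\pi[s(X)]\bigr)$ using \Athree, control $\E_\pi[s(X)]\le \E_\pi[s(X)^2]^{1/2}$ via \Aone, and cite \cite[Theorem~2]{anderson2018non} for nonexplosion. Your remark that the stationary distribution in \Azero{} should be read as a solution of the balance equations $\pi Q=0$, which process-level stationarity implies, is care the paper leaves implicit; it does not change the argument.
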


\begin{proof}
By \Athree,
\[
\E_\pi[\Lambda(X)] \le C_\Lambda\bigl(1+\E_\pi[s(X)]\bigr).
\]
By Jensen's inequality and \Aone,
\[
\E_\pi[s(X)] \le \bigl(\E_\pi[s(X)^2]\bigr)^{1/2}<\infty.
\]
Therefore $\E_\pi[\Lambda(X)]<\infty$, proving
\eqref{eq:Lambda_pi_integrable}. The nonexplosion conclusion follows from
\cite[Theorem~2]{anderson2018non}.
\end{proof}

The arguments below repeatedly use the stationarity identity
\[
\E_\pi[(L\varphi)(X)]=0
\]
for test functions that are not necessarily bounded.  The next lemma gives
natural hypotheses under which this identity is valid.   Although the ingredients
in the proof are standard, we have not found a reference that states the
particular form needed here. We therefore
include the short proof, both for completeness and to make clear exactly what
must be verified for the logarithmic test functions appearing in the
information-flow identities. We note that the lemma is a general statement for
continuous-time Markov chains on countable state spaces and is not specific to
the unit-birth model. For the unit-birth model, the stationarity, full-support, and
$\Lambda$--integrability hypotheses are supplied by
Assumption~\ref{ass:standing} and Lemma~\ref{lem:nonexplosive_from_moments};
the remaining task in later applications is to verify the displayed
$L^1(\pi)$ jump condition \eqref{eq:L1_assumption} for the particular test function under consideration.

\begin{lemma}[Stationarity identity for $L^1(\pi)$ test functions]
\label{lem:stationarity_L1}
Consider a CTMC on a countable state space $S$ with transition rates $q(x,y)$
and total exit rate $\Lambda(x)=\sum_{y\neq x}q(x,y)$. Suppose $\pi$ is a
stationary distribution with full support and
\[
\E_\pi[\Lambda(X)]<\infty.
\]
Let $\varphi:S\to\R$ satisfy
\begin{equation}\label{eq:L1_assumption}
\sum_{x\in S}\pi(x)\sum_{y\neq x}q(x,y)|\varphi(y)-\varphi(x)|<\infty.
\end{equation}
Then, for every $x\in S$, the series
\[
\sum_{y\neq x} q(x,y)\,|\varphi(y)-\varphi(x)|
\]
is finite, and hence
\[
(L\varphi)(x):=\sum_{y\neq x} q(x,y)\big(\varphi(y)-\varphi(x)\big)
\]
is absolutely convergent and well defined. Moreover, $L\varphi\in L^1(\pi)$
and
\[
\E_\pi\!\big[(L\varphi)(X)\big]=0.
\]
\end{lemma}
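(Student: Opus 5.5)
The plan is to derive the identity from the global balance equations of $\pi$, using Fubini/Tonelli to exchange the order of summation. Hypothesis \eqref{eq:L1_assumption} justifies the exchange, and $\E_\pi[\Lambda]<\infty$ together with full support controls the diagonal terms.

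\emph{Step 1: pointwise finiteness.} Because the double sum in \eqref{eq:L1_assumption} is finite and every term is nonnegative, each inner sum weighted by $\pi(x)$ is finite. Since $\pi(x)>0$ for every $x$ by full support, we can divide by $\pi(x)$ and obtain $\sum_{y\neq x}q(x,y)|\varphi(y)-\varphi(x)|<\infty$ for each $x$. So $(L\varphi)(x)$ converges absolutely, and $|L\varphi(x)|$ is bounded by that inner sum. Multiplying by $\pi(x)$ and summing gives $L\varphi\in L^1(\pi)$.

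\emph{Step 2: splitting the generator.} The goal is to show that
\[
\sum_x\pi(x)\sum_{y\neq x}q(x,y)\big(\varphi(y)-\varphi(x)\big)=0.
\]
The natural move is to split the sum into $\sum_x\sum_{y\ne x}\pi(x)q(x,y)\varphi(y)$ minus $\sum_x\pi(x)\Lambda(x)\varphi(x)$. Each piece must be absolutely convergent for the split to be legitimate, and this is not implied by \eqref{eq:L1_assumption} alone: $\varphi$ may be unbounded, and only $\Lambda\in L^1(\pi)$ is assumed, not $\Lambda\varphi\in L^1(\pi)$. This is the main obstacle.

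To get around it, I would use a truncation. Fix a reference state $x_0$ and set $\varphi_M:=(\varphi\wedge M)\vee(-M)$. Then $\varphi_M$ is bounded, and $|\varphi_M(y)-\varphi_M(x)|\le|\varphi(y)-\varphi(x)|$ because clipping is $1$-Lipschitz.

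\emph{Step 3: the bounded case.} For bounded $\psi$, both pieces are absolutely summable, with bound $\|\psi\|_\infty\E_\pi[\Lambda]$. Tonelli then lets us swap sums in the first piece, which becomes $\sum_y\psi(y)\sum_{x\neq y}\pi(x)q(x,y)$. The global balance equation
\[
\sum_{x\ne y}\pi(x)q(x,y)=\pi(y)\Lambda(y)
\]
holds for a stationary distribution of a chain satisfying Condition~\ref{condition:regularity_minimal} with $\E_\pi\Lambda<\infty$; this is the standard characterization of stationarity, in the sense of $\pi Q=0$, as used in \cite{anderson2018non}. Applying it, the first piece equals the second, so $\E_\pi[L\psi]=0$.

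\emph{Step 4: passing to the limit.} We have $L\varphi_M(x)\to L\varphi(x)$ for each $x$, by dominated convergence in $y$ with the summable majorant from Step 1. In addition,
\[
|L\varphi_M(x)|\le\sum_{y\ne x}q(x,y)|\varphi(y)-\varphi(x)|,
\]
and the right-hand side is $\pi$-integrable by \eqref{eq:L1_assumption}. Dominated convergence in $x$ then gives $\E_\pi[L\varphi]=\lim_M\E_\pi[L\varphi_M]=0$.

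The only point that needs care is that the stationarity notion supplies the balance equation $\pi Q=0$. If the paper's stationarity is only the invariance $\pi P_t=\pi$, I would invoke the nonexplosion given by Lemma~\ref{lem:nonexplosive_from_moments} and the cited equivalence in \cite{anderson2018non} to obtain $\pi Q=0$.
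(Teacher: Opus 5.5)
Your proposal is correct and follows essentially the same route as the paper. Both arguments establish pointwise finiteness from full support, then use the bounded truncation $(-M)\vee(\varphi\wedge M)$, for which the sums split and the entrywise balance equation $\sum_{x\neq y}\pi(x)q(x,y)=\pi(y)\Lambda(y)$ gives $\E_\pi[L\varphi^{(M)}]=0$, and then pass to the limit by dominated convergence, first in $y$ and then in $x$, using that clipping is $1$-Lipschitz. The differences are cosmetic: you bound $|L\varphi_M|$ directly by the inner sum where the paper bounds the difference by twice it, you cite \cite{anderson2018non} for the balance equation where the paper cites Norris, and the reference state $x_0$ you fix is never used.
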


\begin{proof}
Set
\[
h(x):=\sum_{y\neq x}q(x,y)|\varphi(y)-\varphi(x)|.
\]
By the assumed $L^1(\pi)$ jump condition \eqref{eq:L1_assumption}, $h\in L^1(\pi)$. Since $\pi$ has full support, it follows that $h(x)<\infty$ for every $x\in S$. Hence, for every
$x\in S$, the series defining
\[
(L\varphi)(x):=\sum_{y\neq x}q(x,y)(\varphi(y)-\varphi(x))
\]
is absolutely convergent and well defined.  

We now prove the desired stationarity identity.  The argument has two steps:
first we prove the result for bounded truncations of $\varphi$, and then we
pass to the limit in $L^1(\pi)$.

For $M\in\N$, let
\[
\varphi^{(M)}:=(-M)\vee(\varphi\wedge M).
\]
Then $|\varphi^{(M)}|\le M$, and hence
\[
|(L\varphi^{(M)})(x)|
\le
\sum_{y\neq x}q(x,y)|\varphi^{(M)}(y)-\varphi^{(M)}(x)|
\le 2M\Lambda(x).
\]
Since $\E_\pi[\Lambda(X)]<\infty$, this shows that
$L\varphi^{(M)}\in L^1(\pi)$.

We claim that
\[
\E_\pi[(L\varphi^{(M)})(X)]=0.
\]
Indeed, since $|\varphi^{(M)}|\le M$ and $\E_\pi[\Lambda(X)]<\infty$, the sums
below are absolutely convergent, and therefore
\begin{align*}
\E_\pi[(L\varphi^{(M)})(X)]
&=
\sum_{x\in S}\pi(x)\sum_{y\neq x}q(x,y)
\big(\varphi^{(M)}(y)-\varphi^{(M)}(x)\big)\\
&=
\sum_{x\in S}\sum_{y\neq x}\pi(x)q(x,y)\varphi^{(M)}(y)
-
\sum_{x\in S}\pi(x)\Lambda(x)\varphi^{(M)}(x).
\end{align*}
Interchanging the order of summation in the first term gives
\[
\sum_{x\in S}\sum_{y\neq x}\pi(x)q(x,y)\varphi^{(M)}(y)
=
\sum_{y\in S}\varphi^{(M)}(y)\sum_{x\neq y}\pi(x)q(x,y).
\]
Since $\pi$ is stationary and $\E_\pi[\Lambda(X)]<\infty$, the stationary
forward equation holds entry-by-entry:
\[
\sum_{x\neq y}\pi(x)q(x,y)=\pi(y)\Lambda(y),
\qquad y\in S,
\]
see, for example, Norris~\cite[Section~3.5, p.~117]{norris1998markov}. Thus
the two terms above are equal, and so
\begin{equation}\label{eq:step1}
\E_\pi[(L\varphi^{(M)})(X)]=0
\qquad\text{for every }M\in\N.
\end{equation}

It remains to pass to the limit. For $x,y\in S$ with $y\neq x$, define
\[
D_M(x,y)
:=
\big(\varphi^{(M)}(y)-\varphi^{(M)}(x)\big)
-
\big(\varphi(y)-\varphi(x)\big).
\]
For each fixed $x$ and $y\neq x$, we have $D_M(x,y)\to0$ as $M\to\infty$.
Moreover, since the truncation map $r\mapsto (-M)\vee(r\wedge M)$ is
$1$-Lipschitz, we have $|\varphi^{(M)}(y)-\varphi^{(M)}(x)| \le |\varphi(y)-\varphi(x)|$ and so
\[
|D_M(x,y)|
\le
2|\varphi(y)-\varphi(x)|, \quad \text{ for all  $x$ and $y \ne x$.}
\]
Hence
\[
q(x,y)|D_M(x,y)|
\le
2q(x,y)|\varphi(y)-\varphi(x)|.
\]
For fixed $x$, the right-hand side is summable in $y$, since
\[
\sum_{y\neq x}2q(x,y)|\varphi(y)-\varphi(x)|
=
2h(x)<\infty.
\]
Therefore dominated convergence, applied to the counting measure sum over
$y\neq x$, gives
\[
(L\varphi^{(M)})(x)-(L\varphi)(x)
=
\sum_{y\neq x}q(x,y)D_M(x,y)
\to 0
\qquad\text{for every }x\in S, \text{ as $M\to \infty$}.
\]
The same estimate also gives, for every $x\in S$,
\[
|(L\varphi^{(M)})(x)-(L\varphi)(x)|
\le
\sum_{y\neq x}q(x,y)|D_M(x,y)|
\le
2h(x).
\]
Since $h\in L^1(\pi)$, dominated convergence with respect to $\pi$ yields
\begin{equation}\label{eq:786978}
L\varphi^{(M)}-L\varphi\to0
\qquad\text{in }L^1(\pi).
\end{equation}
In particular, $L\varphi\in L^1(\pi)$. Combining \eqref{eq:step1} and
\eqref{eq:786978}, we obtain
\[
\big|\E_\pi[(L\varphi)(X)]\big|
\le
\E_\pi\!\left[
\left|(L\varphi)(X)-(L\varphi^{(M)})(X)\right|
\right]
\longrightarrow 0.
\]
Thus $\E_\pi[(L\varphi)(X)]=0$, as claimed.
\end{proof}

\subsection{Logarithmic integrability estimates}

The next estimates are stated for the lattice state space
$S=\Z_{\ge0}^N$, but they do not use the unit-birth structure except through
the moment and total-rate growth assumptions. In particular, the weighted
entropy estimate is a statement about the stationary law $\pi$, while the
subsequent generator estimate applies to any CTMC on $S$ whose total exit rate
$\Lambda$ satisfies the corresponding integrability bounds. In the unit-birth
model, these hypotheses are supplied by Assumption~\ref{ass:standing}.

\begin{lemma}[Weighted entropy tail and $\Lambda$--weighted integrability]
\label{lem:weighted_entropy_tail}
Let $\pi$ be a probability distribution on $S=\Z_{\ge0}^N$ with full support satisfying
$\E_\pi[s(X)^2]<\infty$, where $s(x)=\|x\|_1$. Then
\begin{equation}\label{eq:weighted_entropy_tail}
\sum_{x\in S}\pi(x)\,(1+s(x))\,|\log\pi(x)|<\infty.
\end{equation}
Moreover, if $\Lambda:S\to[0,\infty)$ satisfies
\begin{equation}
\label{eq:Lambda_linear_growth_lem}
\Lambda(x)\le C_\Lambda(1+s(x))\qquad \forall x\in S,
\end{equation}
then
\begin{equation}\label{eq:Lambda_log_integrable}
\sum_{x\in S}\pi(x)\,\Lambda(x)\,\bigl(1+|\log\pi(x)|\bigr)<\infty.
\end{equation}
\end{lemma}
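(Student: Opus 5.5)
The plan is to split the sum according to whether $\pi(x)$ is large or small compared with an explicit summable reference weight, and then to deduce the second claim from the first by \eqref{eq:Lambda_linear_growth_lem}. Since $\pi(x)\le 1$, we have $|\log\pi(x)|=-\log\pi(x)$ everywhere, so only small values of $\pi$ cause trouble. Fix the reference weight
\[
w(x):=c\,(1+s(x))^{-(N+3)},
\]
and choose $c>0$ so that $\sum_x w(x)\le 1$. This is possible because the number of $x\in S$ with $s(x)=k$ is $\binom{k+N-1}{N-1}\le C(1+k)^{N-1}$. We then split $S$ into the two sets $S_1:=\{x:\pi(x)\ge w(x)\}$ and $S_2:=\{x:\pi(x)<w(x)\}$.

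On $S_1$, the bound $-\log\pi(x)\le -\log w(x)=-\log c+(N+3)\log(1+s(x))$ holds. Together with $\log(1+s)\le s$, this gives $(1+s)|\log\pi|\le C'(1+s)^2$ on $S_1$. Its $\pi$-expectation is therefore finite by the hypothesis $\E_\pi[s(X)^2]<\infty$.

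On $S_2$, the idea is to use that $t\mapsto -t\log t$ is increasing on $[0,e^{-1}]$. Shrinking $c$ so that $w\le e^{-1}$, we get $\pi(x)|\log\pi(x)|\le w(x)|\log w(x)|$ on $S_2$. Hence
\[
\sum_{x\in S_2}\pi(x)(1+s(x))|\log\pi(x)|\le\sum_x (1+s(x))\,w(x)\,\bigl(C''+(N+3)\log(1+s(x))\bigr),
\]
and the general term is $O\bigl((1+s)^{-(N+2)}\log(1+s)\bigr)$. Grouping by level sets of $s$ turns this into a series of order $\sum_k (1+k)^{N-1}(1+k)^{-(N+2)}\log(1+k)<\infty$. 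Adding the two pieces proves \eqref{eq:weighted_entropy_tail}.

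For \eqref{eq:Lambda_log_integrable}, bound $\Lambda(x)(1+|\log\pi(x)|)$ by $C_\Lambda(1+s(x))+C_\Lambda(1+s(x))|\log\pi(x)|$. The first term is $\pi$-integrable because $\E_\pi[s]\le(\E_\pi[s^2])^{1/2}<\infty$, and the second is integrable by \eqref{eq:weighted_entropy_tail}.

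The step needing the most care is the small-probability region $S_2$. There the monotonicity of $-t\log t$ must be applied below $e^{-1}$, and the reference weight must decay fast enough to absorb both the polynomial volume growth of the level sets and the extra factor $(1+s)$. With the exponent $N+3$ chosen above, this leaves a margin of $(1+k)^{-3}\log(1+k)$ per level, so the series converges.
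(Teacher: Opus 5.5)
Your proof is correct, but it follows a different route from the paper's.

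The paper tilts $\pi$ to the probability measure $\pi^\ast(x)=(1+s(x))\pi(x)/\beta$. It then applies Gibbs' inequality $D_{\mathrm{KL}}(\pi^\ast\|Q_\rho)\ge 0$ against the geometric reference $Q_\rho(x)=(1-\rho)^N\rho^{s(x)}$ to get $H(\pi^\ast)<\infty$. Finally it transfers back to $\pi$ via $\log\pi=\log\pi^\ast-\log(1+s)+\log\beta$.

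You never pass through entropy or KL divergence; you work pointwise instead. Your polynomial reference weight $w$ splits $S$ into two regions. On $\{\pi\ge w\}$, the bound $-\log\pi\le-\log w$ is controlled by a moment of $\pi$. On $\{\pi<w\}$, monotonicity of $-t\log t$ on $[0,e^{-1}]$ lets you replace $\pi$ by $w$ outright, so no moment hypothesis is used there at all.

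The paper's version has three advantages:
\begin{itemize}
\item it is shorter;
\item it needs no lattice-point counting, since $Q_\rho$ normalizes exactly in every dimension, whereas your exponent $N+3$ must beat the $(1+k)^{N-1}$ growth of the level sets;
\item it fits the information-theoretic language used in the rest of the paper.
\end{itemize}
However, the geometric reference is precisely what forces the second-moment hypothesis there. Because $-\log Q_\rho$ is linear in $s$, one needs $\E_{\pi^\ast}[s(X)]<\infty$, that is, $\E_\pi[(1+s)s]<\infty$.

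Your argument is fully elementary and shows more precisely where the hypothesis enters. If you drop the crude bound $\log(1+s)\le s$ on $S_1$, you see that $\E_\pi[(1+s(X))\log(1+s(X))]<\infty$ already suffices for \eqref{eq:weighted_entropy_tail}. The paper's argument would give the same weakening if a polynomial reference measure replaced $Q_\rho$.

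The deduction of \eqref{eq:Lambda_log_integrable} from \eqref{eq:weighted_entropy_tail} is the same in both proofs.
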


\begin{proof}
Note that under \eqref{eq:Lambda_linear_growth_lem}, the inequality \eqref{eq:Lambda_log_integrable}
follows immediately from \eqref{eq:weighted_entropy_tail}. Hence it suffices to prove
\eqref{eq:weighted_entropy_tail}.

Fix $\rho\in(0,1)$ and define a reference probability measure on $S$ by
\[
Q_\rho(x):=(1-\rho)^N\rho^{s(x)},\qquad x\in S.
\]
Let $\beta=\E_\pi[1+s(X)]$, which is finite by $\E_\pi[s(X)^2]<\infty$ and Jensen's inequality, and define the ``tilted'' probability measure
\[
\pi^\ast(x):=\frac{(1+s(x))\pi(x)}{\beta}.
\]
Since $Q_\rho(x)>0$ for all $x$, we have $\pi^\ast\ll Q_\rho$, and hence
$D_{\mathrm{KL}}(\pi^\ast\|Q_\rho)\ge 0$, i.e.
\[
-\sum_x \pi^\ast(x)\log \pi^\ast(x)\ \le\ -\sum_x \pi^\ast(x)\log Q_\rho(x).
\]
Now $\log Q_\rho(x)=N\log(1-\rho)+s(x)\log\rho$, so
\[
-\sum_x \pi^\ast(x)\log Q_\rho(x)
= -N\log(1-\rho) -(\log\rho)\,\E_{\pi^\ast}[s(X)].
\]
Moreover,
\[
\E_{\pi^\ast}[s(X)]
=\frac{\E_\pi[(1+s(X))s(X)]}{\beta}\le \frac{\E_\pi[(1+s(X))^2]}{\beta}
\le \frac{\E_\pi[2(1+s(X)^2)]}{\beta}
<\infty,
\]
where finiteness follows from the assumed finite second moment.  Therefore
\[
H(\pi^\ast):=-\sum_x \pi^\ast(x)\log\pi^\ast(x)<\infty.
\]

Next,  note that
\[
\log\pi^\ast(x)=\log\pi(x)+\log(1+s(x))-\log \beta
\]
which we rearrange and use the  triangle inequality to obtain
\[
|\log\pi(x)|
\le |\log\pi^\ast(x)|+\log(1+s(x))+|\log \beta|.
\]
Multiplying by $(1+s(x))\pi(x)$ and summing over $x$ yields
\begin{align*}
\sum_x \pi(x)(1+s(x))&\,|\log\pi(x)|
\le \sum_x \pi(x)(1+s(x))\,|\log\pi^\ast(x)|
   + \sum_x \pi(x)(1+s(x))\,\log(1+s(x))\\
   &\hspace{1in}
   + |\log \beta|\sum_x \pi(x)(1+s(x))\\
&= \beta \sum_x \pi^\ast(x)\,|\log\pi^\ast(x)|
   + \sum_x \pi(x)(1+s(x))\,\log(1+s(x))
   + |\log \beta|\,\beta.
\end{align*}

Since $H(\pi^\ast)<\infty$ implies $\sum_x \pi^\ast(x)|\log\pi^\ast(x)|<\infty$, the first term is finite.
For the second term, use the elementary bound $(1+r)\log(1+r)\le (1+r^2)$ for $r\ge 0$ and the assumed finite second moment to conclude finiteness. This proves \eqref{eq:weighted_entropy_tail}.
\end{proof}

We next apply the preceding estimate to the logarithmic potential
$g(x)=-\log\pi(x)$. This is the first place where the general stationarity
Lemma~\ref{lem:stationarity_L1} above is used: the goal is to verify its
$L^1(\pi)$ jump condition \eqref{eq:L1_assumption} for $g$, and hence justify
the identity $\E_\pi[(Lg)(X)]=0$.

\begin{lemma}[Integrability for $g=-\log\pi$ and stationarity of $Lg$]
\label{lem:abs_int_Lg}
Consider a CTMC on $S=\Z_{\ge0}^N$ for which
Assumption~\ref{ass:standing} holds. Let
$\Lambda(x):=\sum_{y\neq x}q(x,y)$ and define $g:S\to\R$ by
\[
g(x):=-\log\pi(x).
\]
Then
\begin{equation}\label{eq:g_weighted_double_sum_finite}
\sum_{x\in S}\pi(x)\sum_{y\neq x}q(x,y)\bigl(|g(y)|+|g(x)|\bigr)<\infty.
\end{equation}
In particular, $Lg\in L^1(\pi)$ and
\[
\E_\pi[(Lg)(X)]=0.
\]
\end{lemma}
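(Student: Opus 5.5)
The double sum splits into an ``outgoing'' and an ``incoming'' piece. We handle them separately and then invoke Lemma~\ref{lem:stationarity_L1}.

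\emph{Outgoing term.} Since $\sum_{y\neq x}q(x,y)=\Lambda(x)$, the $|g(x)|$ part is
\[
\sum_x\pi(x)\Lambda(x)|g(x)| = \sum_x \pi(x)\Lambda(x)|\log\pi(x)|.
\]
This is finite by \eqref{eq:Lambda_log_integrable} of Lemma~\ref{lem:weighted_entropy_tail}. The hypotheses of that lemma hold: full support comes from \Azero, $\E_\pi[s^2]<\infty$ from \Aone, and linear growth of $\Lambda$ from \Athree.

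\emph{Incoming term.} Interchange the order of summation (Tonelli applies, since all terms are nonnegative):
\[
\sum_x\pi(x)\sum_{y\neq x}q(x,y)|g(y)|=\sum_y |g(y)|\sum_{x\neq y}\pi(x)q(x,y).
\]
Next use the entrywise stationary forward equation $\sum_{x\neq y}\pi(x)q(x,y)=\pi(y)\Lambda(y)$. This is valid because $\E_\pi[\Lambda]<\infty$ by Lemma~\ref{lem:nonexplosive_from_moments}, and it is the same fact cited from Norris in the proof of Lemma~\ref{lem:stationarity_L1}. The incoming term therefore equals $\sum_y\pi(y)\Lambda(y)|\log\pi(y)|$, the same finite quantity as the outgoing term.

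If one prefers to avoid the forward equation, the unit-birth structure gives a direct bound. The only $x$ with $q(x,y)>0$ are $y\mp e_i$. For these, $\pi(y-e_i)f_i\le$ inflow and $\pi(y+e_i)(y_i+1)/\tau_i$ are each at most $\pi(y)\Lambda(y)$, again by stationarity. So the forward-equation route is the cleaner one.

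\emph{Conclusion.} Adding the two pieces gives \eqref{eq:g_weighted_double_sum_finite}. Since $|g(y)-g(x)|\le|g(y)|+|g(x)|$, this implies the jump condition \eqref{eq:L1_assumption} for $\varphi=g$. Lemma~\ref{lem:stationarity_L1}, whose hypotheses are full support and $\E_\pi[\Lambda]<\infty$, then yields $Lg\in L^1(\pi)$ and $\E_\pi[(Lg)(X)]=0$.

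The main obstacle is the incoming term, where $|g(y)|$ is evaluated at the target state rather than the source. The key step is exchanging the sums and applying the exact stationary inflow identity; this is what requires $\Lambda\in L^1(\pi)$ and nonnegativity for Tonelli. After that exchange, everything reduces to the weighted entropy estimate already proved in Lemma~\ref{lem:weighted_entropy_tail}.
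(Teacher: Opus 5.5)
Your proposal is correct and follows the paper's proof essentially step for step. The outgoing piece is bounded by the $\Lambda$-weighted entropy estimate of Lemma~\ref{lem:weighted_entropy_tail}; the incoming piece reduces to the same quantity by Tonelli and the entrywise stationary inflow identity $\sum_{x\neq y}\pi(x)q(x,y)=\pi(y)\Lambda(y)$; and Lemma~\ref{lem:stationarity_L1} then gives $\E_\pi[(Lg)(X)]=0$. Your parenthetical ``direct bound'' via the unit-birth structure still relies on that same stationarity identity, so it is not a genuine alternative, but the main argument does not need it.
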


\begin{proof}
By Lemma~\ref{lem:nonexplosive_from_moments}, we have
$\sum_{x\in S}\pi(x)\Lambda(x)<\infty$. Moreover, by
Lemma~\ref{lem:weighted_entropy_tail} and the linear growth assumption
\Athree,
\begin{equation}\label{eq:87976978}
\sum_{x\in S}\pi(x)\Lambda(x)|g(x)|
=
\sum_{x\in S}\pi(x)\Lambda(x)|\log\pi(x)|<\infty.
\end{equation}

Write
\[
T:=\sum_{x\in S}\pi(x)\sum_{y\neq x}q(x,y)\bigl(|g(y)|+|g(x)|\bigr)=T_1+T_2,
\]
where
\[
T_1:=\sum_{x\in S}\pi(x)\sum_{y\neq x}q(x,y)\,|g(x)|
=\sum_{x\in S}\pi(x)\Lambda(x)\,|g(x)|
\]
and
\[
T_2:=\sum_{x\in S}\pi(x)\sum_{y\neq x}q(x,y)\,|g(y)|.
\]
By \eqref{eq:87976978}, $T_1<\infty$. Since all terms are nonnegative, Tonelli's theorem permits swapping sums in $T_2$:
\[
T_2=\sum_{y\in S}|g(y)|\sum_{x\in S,\,x\neq y}\pi(x)q(x,y).
\]
Because $\pi$ is stationary, for each $y\in S$,
\[
\sum_{x\in S,\,x\neq y}\pi(x)q(x,y)=\pi(y)\sum_{z\in S,\,z\neq y}q(y,z)=\pi(y)\Lambda(y),
\]
and hence
\[
T_2=\sum_{y\in S}\pi(y)\Lambda(y)|g(y)|=T_1<\infty.
\]
Therefore $T=T_1+T_2<\infty$, proving \eqref{eq:g_weighted_double_sum_finite}.

Next, since $|g(y)-g(x)|\le |g(y)|+|g(x)|$, we have
\[
\sum_{x\in S}\pi(x)\sum_{y\neq x}q(x,y)\,|g(y)-g(x)|
\le T<\infty.
\]
Thus the hypotheses of Lemma~\ref{lem:stationarity_L1} (with $\varphi=g$) hold, and we conclude
$Lg\in L^1(\pi)$ and $\E_\pi[(Lg)(X)]=0$.
\end{proof}

\subsection{Logarithmic potentials, single information flow, and marginal flux balance}

Section~\ref{sec:statement_main_theorem} introduced the fiber currents
$J_i(n+1,n;v)$ and the single information flow $(\dot I)_{X_i}$. We now prove
the two unit-birth identities promised there. First, we show that the series
defining $(\dot I)_{X_i}$ is absolutely convergent and equals the
coordinate-generator quantity $-\E_\pi[(L_i h_i)(X)]$. Second, we prove the
marginal flux balance \eqref{eq:marginal_flux_balance_preview}, equivalently
$\sum_v J_i(n+1,n;v)=0$. These identities form the technical bridge between
the information-flow expression and the generator formalism used in the proof
of Theorem~\ref{thm:main}.

\begin{lemma}[Single information flow: absolute convergence and generator representation]
\label{lem:Idot_well_def_and_rep}
Fix $i \in \{1,\dots, N\}$.
For the unit-birth model with generator \eqref{eq:generator}, suppose
Assumption~\ref{ass:standing} holds. Then the series defining
$(\dot I)_{X_i}$ in \eqref{eq:Idot_clean_def} is absolutely convergent.

More precisely, writing
\[
(\dot I)_{X_i}
=
\sum_{n\ge0}\sum_v
\left(
\pi(n,v)f_i(v)-\pi(n+1,v)\frac{n+1}{\tau_i}
\right)
\big(h_i(n,v)-h_i(n+1,v)\big),
\]
each of the four nonnegative sums
\begin{align}
\label{eq:four_piece_bounds}
\sum_{n,v}\pi(n,v)f_i(v)\,h_i(n+1,v),
\qquad
\sum_{n,v}\pi(n,v)f_i(v)\,h_i(n,v),\\
\notag
\sum_{n,v}\pi(n+1,v)\frac{n+1}{\tau_i}\,h_i(n+1,v),
\qquad
\sum_{n,v}\pi(n+1,v)\frac{n+1}{\tau_i}\,h_i(n,v)
\end{align}
is finite. Consequently, $L_i h_i\in L^1(\pi)$ and
\begin{equation}\label{eq:Idot_equals_Lih}
(\dot I)_{X_i}
=
-\E_\pi\big[(L_i h_i)(X)\big].
\end{equation}
\end{lemma}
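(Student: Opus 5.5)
The plan is to bound every term by the integrability of $g=-\log\pi$ already established in Lemma~\ref{lem:abs_int_Lg}, and then reindex the death part of $L_i h_i$ to match the definition of $(\dot I)_{X_i}$.

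\emph{Pointwise comparison.} Since $\pi_n^i(v)\in(0,1]$ and $\pi_i(n)\in(0,1]$ (full support), both potentials $h_i$ and $g_i$ are nonnegative and finite. The decomposition \eqref{eq:g_decomp} then gives
\[
0\le h_i(n,v)\le g(n,v)=|g(n,v)|\qquad\text{for all }(n,v)\in S.
\]
This is the key observation: it lets me dominate each sum in \eqref{eq:four_piece_bounds} by a corresponding sum with $g$ in place of $h_i$. It also shows that the four sums in \eqref{eq:four_piece_bounds} are indeed sums of nonnegative terms.

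\emph{Finiteness of the four sums.} Each sum is matched to one of the two pieces $T_1$, $T_2$ in the proof of Lemma~\ref{lem:abs_int_Lg}. That lemma gives
\[
T_1+T_2=\sum_{x}\pi(x)\sum_{y\ne x}q(x,y)\bigl(|g(y)|+|g(x)|\bigr)<\infty .
\]
\begin{itemize}
\item For the first sum, the factor $\pi(n,v)f_i(v)\,h_i(n+1,v)$ is at most $\pi(x)\,q(x,x+e_i)\,|g(x+e_i)|$ with $x=(n,v)$. These are some of the terms of $T_2$.
\item For the second sum, $\pi(n,v)f_i(v)\,h_i(n,v)\le\pi(x)\Lambda(x)|g(x)|$. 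This is bounded by $T_1$, which is finite by \eqref{eq:87976978}.
\item For the third sum, with $x=(n+1,v)$, we have $\frac{n+1}{\tau_i}=q(x,x-e_i)\le\Lambda(x)$. So the terms are again dominated by $T_1$.
\item For the fourth sum, $\pi(n+1,v)\frac{n+1}{\tau_i}\,h_i(n,v)\le \pi(x)\,q(x,x-e_i)\,|g(x-e_i)|$. These are again terms of $T_2$.
\end{itemize}
Since all terms are nonnegative, each of the four sums is finite. For the series \eqref{eq:Idot_clean_def}, I bound
\[
|J_i|\,|h_i(n,v)-h_i(n+1,v)|
\]
by expanding $|J_i|$ as at most the sum of its two nonnegative pieces and $|h_i(n,v)-h_i(n+1,v)|$ as at most $h_i(n,v)+h_i(n+1,v)$. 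The resulting bound is exactly the sum of the four terms above, so the series defining $(\dot I)_{X_i}$ is absolutely convergent.

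\emph{Generator representation.} Write
\[
\E_\pi[(L_ih_i)(X)]=\sum_{n,v}\pi(n,v)f_i(v)\bigl(h_i(n+1,v)-h_i(n,v)\bigr)+\sum_{n\ge1,v}\pi(n,v)\frac{n}{\tau_i}\bigl(h_i(n-1,v)-h_i(n,v)\bigr).
\]
Both series are absolutely convergent by the four bounds, so $L_ih_i\in L^1(\pi)$ and the split into two series is legitimate. Reindexing $n\mapsto n+1$ in the death series turns it into
\[
\sum_{n\ge0,v}\pi(n+1,v)\frac{n+1}{\tau_i}\bigl(h_i(n,v)-h_i(n+1,v)\bigr).
\]
Absolute convergence allows me to add the two series term by term over $(n,v)$. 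The result is
\[
-\sum_{n,v}J_i(n+1,n;v)\bigl(h_i(n,v)-h_i(n+1,v)\bigr)=-(\dot I)_{X_i},
\]
which is \eqref{eq:Idot_equals_Lih}.

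The only genuinely nontrivial point is controlling the ``off-diagonal'' terms, where $h_i$ is evaluated at the neighbouring state; these are the first and fourth sums. I expect this to go through by the comparison $h_i\le g$ combined with the Tonelli-plus-stationarity argument already used for $T_2$. The remaining steps are bookkeeping.
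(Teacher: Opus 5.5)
Your proposal is correct and follows essentially the same route as the paper. You use the pointwise bound $0\le h_i\le g$ to dominate the four sums by the $i$-birth and $i$-death pieces of the finite double sum from Lemma~\ref{lem:abs_int_Lg}; your finer split into $T_1$ and $T_2$ is only bookkeeping on that same bound. After that, absolute convergence, $L_ih_i\in L^1(\pi)$, and reindexing the death term give \eqref{eq:Idot_equals_Lih}, exactly as in the paper.
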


\begin{proof}
We first record the pointwise comparison between $h_i$ and $g$. Since
\[
\pi_n^i(v)=\frac{\pi(n,v)}{\pi_i(n)}
\qquad\text{and}\qquad
\pi_i(n)\le 1,
\]
we have $\pi_n^i(v)\ge \pi(n,v)$. Therefore
\begin{equation}\label{eq:hi_le_g_pointwise_Idot}
0\le h_i(n,v)\le g(n,v)
\qquad\text{for all }(n,v)\in S.
\end{equation}

By Lemma~\ref{lem:abs_int_Lg},
\begin{equation}\label{eq:abs_int_Lg_double_sum_for_Idot}
\sum_{x\in S}\pi(x)\sum_{y\neq x}q(x,y)\big(|g(y)|+|g(x)|\big)<\infty.
\end{equation}
We now write the relevant pieces of this bound in the present $(n,v)$ notation.
The $i$--birth transitions are
\[
(n,v)\longrightarrow(n+1,v)
\qquad\text{with rate } f_i(v),
\]
and their contribution to \eqref{eq:abs_int_Lg_double_sum_for_Idot} is
\[
\sum_{n,v}\pi(n,v)f_i(v)
\bigl(|g(n+1,v)|+|g(n,v)|\bigr)<\infty.
\]
Similarly, the $i$--death transitions may be indexed as
\[
(n+1,v)\longrightarrow(n,v)
\qquad\text{with rate } \frac{n+1}{\tau_i},
\]
and their contribution to \eqref{eq:abs_int_Lg_double_sum_for_Idot} is
\[
\sum_{n,v}\pi(n+1,v)\frac{n+1}{\tau_i}
\bigl(|g(n,v)|+|g(n+1,v)|\bigr)<\infty.
\]
Since $g\ge0$ and $0\le h_i\le g$ by
\eqref{eq:hi_le_g_pointwise_Idot}, these two finite bounds show that the four
sums in \eqref{eq:four_piece_bounds} are finite.
It follows that the series defining $(\dot I)_{X_i}$ is absolutely convergent,
since
\[
\sum_{n\ge0}\sum_v\left|
\pi(n,v)f_i(v)-\pi(n+1,v)\frac{n+1}{\tau_i}
\right|
\left|h_i(n,v)-h_i(n+1,v)\right|
\]
is bounded above by the sum of the four nonnegative sums appearing in
\eqref{eq:four_piece_bounds}.

The same four bounds also imply
\[
\sum_{n,v}\pi(n,v)f_i(v)|h_i(n+1,v)-h_i(n,v)|
+
\sum_{n\ge1,v}\pi(n,v)\frac{n}{\tau_i}|h_i(n-1,v)-h_i(n,v)|
<\infty.
\]
Thus $L_i h_i\in L^1(\pi)$, and the following termwise computation is justified:
\begin{align*}
\E_\pi[(L_i h_i)(X)]
&=
\sum_{n,v}\pi(n,v)f_i(v)\big(h_i(n+1,v)-h_i(n,v)\big)\\
&\quad+
\sum_{n\ge1,v}\pi(n,v)\frac{n}{\tau_i}
\big(h_i(n-1,v)-h_i(n,v)\big).
\end{align*}
Reindexing the death term by replacing $n$ with $n+1$ gives
\[
\sum_{n\ge1,v}\pi(n,v)\frac{n}{\tau_i}
\big(h_i(n-1,v)-h_i(n,v)\big)
=
\sum_{n\ge0,v}\pi(n+1,v)\frac{n+1}{\tau_i}
\big(h_i(n,v)-h_i(n+1,v)\big).
\]
Combining the paired birth and death terms gives
\begin{align*}
\E_\pi[(L_i h_i)(X)]
&=
-\sum_{n\ge0}\sum_v
\left(
\pi(n,v)f_i(v)-\pi(n+1,v)\frac{n+1}{\tau_i}
\right)
\big(h_i(n,v)-h_i(n+1,v)\big).
\end{align*}
By Definition~\ref{def:Idot_clean}, the sum on the right-hand side is
$(\dot I)_{X_i}$. Hence $\E_\pi[(L_i h_i)(X)]=-(\dot I)_{X_i}$
as claimed.
\end{proof}

We also prove the marginal flux balance stated in Section~\ref{sec:statement_main_theorem}. This is weaker than detailed balance: the individual fiber currents
$J_i(n+1,n;v)$ need not vanish, but their sum over each fiber does.

\begin{lemma}[Marginal flux balance]\label{lem:flux}
Consider the unit-birth CTMC on $S=\Z_{\ge0}^N$ with generator
\eqref{eq:generator}. Suppose Assumption~\ref{ass:standing} holds, and let
$\pi$ be its stationary distribution. Then, for every $i\in\{1,\dots,N\}$ and
$n\ge0$,
\begin{equation}\label{eq:marginal-flux}
\pi_i(n)\,m_i(n)=\frac{n+1}{\tau_i}\,\pi_i(n+1).
\end{equation}
Equivalently,
\[
\sum_v J_i(n+1,n;v)=0.
\]
\end{lemma}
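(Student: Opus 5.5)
The plan is to apply the stationarity identity of Lemma~\ref{lem:stationarity_L1} to the test function
\[
\varphi_k(x):=\mathbf 1_{\{x_i\le k\}},\qquad k\ge 0,
\]
which depends only on the $i$th coordinate. Since $\varphi_k$ is bounded by $1$, the jump condition \eqref{eq:L1_assumption} holds: the integrand is at most $\Lambda(x)$, and $\E_\pi[\Lambda(X)]<\infty$ by Lemma~\ref{lem:nonexplosive_from_moments}. The lemma therefore gives $\E_\pi[(L\varphi_k)(X)]=0$.

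Next I compute $L\varphi_k$ explicitly. Births and deaths of coordinates $j\neq i$ do not change $x_i$, so $L_j\varphi_k=0$ for $j\ne i$. For coordinate $i$, the value of $\varphi_k$ changes only along two kinds of edges:
\begin{itemize}
\item a birth from $x_i=k$ to $k+1$, with increment $-1$ and rate $f_i(v)$;
\item a death from $x_i=k+1$ to $k$, with increment $+1$ and rate $(k+1)/\tau_i$.
\end{itemize}
Hence
\[
(L\varphi_k)(n,v)=-f_i(v)\mathbf 1_{\{n=k\}}+\frac{k+1}{\tau_i}\mathbf 1_{\{n=k+1\}}.
\]
Taking $\pi$-expectations, both pieces are absolutely summable. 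The first is bounded by $\mu_i<\infty$ using \Atwo, and the second is bounded by $(k+1)/\tau_i$. This yields
\[
0=-\sum_v\pi(k,v)f_i(v)+\frac{k+1}{\tau_i}\pi_i(k+1).
\]

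Finally, by definition of the conditional law, $\sum_v\pi(k,v)f_i(v)=\pi_i(k)\,m_i(k)$. Here $m_i(k)$ is finite precisely because this sum is finite, and $\pi_i(k)>0$ by full support. This gives \eqref{eq:marginal-flux}. The equivalence with $\sum_vJ_i(k+1,k;v)=0$ follows by summing \eqref{eq:fiber-current} over $v$; this termwise splitting is legitimate because both constituent series converge absolutely, as just shown.

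There is no real obstacle. The only points needing care are that the indicator test function is bounded, so Lemma~\ref{lem:stationarity_L1} applies without any logarithmic estimates, and that $\sum_v\pi(k,v)f_i(v)<\infty$, which follows from \Atwo. As an alternative, one could sum the stationary forward equations over the set $\{x_i\le k\}$ and argue that the net flux out of it vanishes. However, the generator route above avoids having to justify interchanging infinite sums by hand.
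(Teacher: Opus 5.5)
Your proposal is correct and is essentially the paper's proof: apply Lemma~\ref{lem:stationarity_L1} to the bounded indicator $\mathbf 1_{\{x_i\le n\}}$, note that only $i$-births from level $n$ and $i$-deaths from level $n+1$ change its value, and take $\pi$-expectations. Your added remarks, on the finiteness of $\sum_v\pi(n,v)f_i(v)$ via \Atwo{} and on splitting $\sum_v J_i(n+1,n;v)$ termwise, just spell out steps the paper leaves implicit.
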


\begin{proof}
Fix $i$ and $n\ge0$, and let
\[
\varphi(x):=\mathbf 1_{\{x_i\le n\}}.
\]
Since $\varphi$ is bounded, the $L^1(\pi)$ jump condition in
Lemma~\ref{lem:stationarity_L1} is bounded by $2\E_\pi[\Lambda(X)]<\infty$.
Thus Lemma~\ref{lem:stationarity_L1} applies, and hence
\[
\E_\pi[(L\varphi)(X)]=0.
\]
Only $i$--births from level $n$ and $i$--deaths from level $n+1$ change
$\varphi$, so
\[
(L\varphi)(x)
=
-f_i(x_{-i})\mathbf 1_{\{x_i=n\}}
+
\frac{n+1}{\tau_i}\mathbf 1_{\{x_i=n+1\}}.
\]
Taking expectation under $\pi$ gives
\[
0
=
-\sum_v\pi(n,v)f_i(v)
+
\frac{n+1}{\tau_i}\sum_v\pi(n+1,v),
\]
which is exactly \eqref{eq:marginal-flux}. The equivalent form
$\sum_v J_i(n+1,n;v)=0$ follows directly from the definition of
$J_i(n+1,n;v)$.
\end{proof}

\section{Proof of Theorem~\ref{thm:main}}
\label{sec:proof}

We prove Theorem~\ref{thm:main} according to its numbered claims. The
technical estimates in Section~\ref{sec:technical_lemmas} justify the
stationary generator identities for the unbounded test functions used below.
Throughout this section we work under Assumption~\ref{ass:standing}.

\subsection{Part \textup{(1)}: stationary mean and Fano--covariance identity}

\begin{proof}[Proof of Theorem~\ref{thm:main} part \textup{(1)}]
Fix $i\in\{1,\dots,N\}$ and write $X\sim\pi$.

First let $\varphi(x)=x_i$. Since only jumps in the $i$th coordinate change
$\varphi$, we have
\[
(L\varphi)(x)
=
f_i(x_{-i})-\frac{x_i}{\tau_i}.
\]
Moreover,
\[
\sum_{y\neq x}q(x,y)|\varphi(y)-\varphi(x)|
=
f_i(x_{-i})+\frac{x_i}{\tau_i}.
\]
Taking $\pi$--expectations, this is finite by \Atwo and \Aone. Hence
Lemma~\ref{lem:stationarity_L1} applies and gives
\[
0=\E_\pi[(L\varphi)(X)]
=
\E_\pi[f_i(X_{-i})]-\frac{1}{\tau_i}\E_\pi[X_i]
=
\mu_i-\frac{1}{\tau_i}\E_\pi[X_i].
\]
Therefore
\begin{equation}\label{eq:mean_identity_thm_proof}
\E_\pi[X_i]=\tau_i\mu_i,
\end{equation}
which proves \eqref{eq:mean_identity_thm}.

Next let $\varphi(x)=x_i^2$. Again only jumps in the $i$th coordinate
contribute, and
\begin{align*}
(L\varphi)(x)
&=
f_i(x_{-i})\big((x_i+1)^2-x_i^2\big)
+
\frac{x_i}{\tau_i}\big((x_i-1)^2-x_i^2\big)\\
&=
f_i(x_{-i})(2x_i+1)+\frac{x_i}{\tau_i}(-2x_i+1).
\end{align*}
To justify stationarity, note that
\[
|\,(x_i+1)^2-x_i^2\,|=2x_i+1,
\qquad
|\,(x_i-1)^2-x_i^2\,|\le 2x_i+1,
\]
and therefore
\[
\sum_{y\neq x}q(x,y)|\varphi(y)-\varphi(x)|
\le
(2x_i+1)\Big(f_i(x_{-i})+\frac{x_i}{\tau_i}\Big)
\le
(2x_i+1)\Lambda(x).
\]
Using \Athree and $x_i\le s(x)$,
\[
(2x_i+1)\Lambda(x)
\le
2C_\Lambda(1+s(x))(1+x_i)
\le
2C_\Lambda(1+s(x))^2
\le
4C_\Lambda(1+s(x)^2).
\]
This is $\pi$--integrable by \Aone, so Lemma~\ref{lem:stationarity_L1} applies.
Thus
\[
0=\E_\pi[(L\varphi)(X)]
=
\E_\pi\!\big[f_i(X_{-i})(2X_i+1)\big]
+
\E_\pi\!\left[\frac{X_i}{\tau_i}(-2X_i+1)\right].
\]
Equivalently,
\[
0
=
2\E_\pi[f_i(X_{-i})X_i]+\mu_i
-\frac{2}{\tau_i}\E_\pi[X_i^2]
+\frac{1}{\tau_i}\E_\pi[X_i].
\]
Using \eqref{eq:mean_identity_thm_proof}, this becomes
\[
\E_\pi[X_i^2]
=
\tau_i\E_\pi[f_i(X_{-i})X_i]+\E_\pi[X_i].
\]
Subtracting $\E_\pi[X_i]^2
=
\tau_i\E_\pi[f_i(X_{-i})]\,\E_\pi[X_i]$
from both sides gives
\[
\Var_\pi(X_i)
=
\tau_i\Cov_\pi\!\big(f_i(X_{-i}),X_i\big)
+
\E_\pi[X_i].
\]
Finally, dividing by $\E_\pi[X_i]=\tau_i\mu_i>0$ yields
\[
F_{X_i}
=
1+\frac{\Cov_\pi\!\big(f_i(X_{-i}),X_i\big)}{\mu_i},
\]
which is \eqref{eq:fano_cov_identity_thm}.
\end{proof}

\subsection{Part \textup{(2)}: single-component information-flow inequality}

The argument in this subsection follows the formal log-sum estimate of
Ripsman, Kell, and Hilfinger~\cite{RKH2025}.  The new point here is that, by
the results of Section~\ref{sec:technical_lemmas}, all sums and reindexings
below are justified under Assumption~\ref{ass:standing}.

\begin{proof}[Proof of Theorem~\ref{thm:main} part \textup{(2)}]
Fix $i\in\{1,\dots,N\}$. By Lemma~\ref{lem:Idot_well_def_and_rep}, the
series defining $(\dot I)_{X_i}$ is absolutely convergent. Write
$(\dot I)_{X_i}=A+B$, where
\begin{align*}
A&:=\sum_{n\ge0}\sum_{v}\pi(n,v)f_i(v)
\log\frac{\pi^i_{n+1}(v)}{\pi^i_n(v)},\\
B&:=-\sum_{n\ge0}\sum_{v}\pi(n+1,v)\frac{n+1}{\tau_i}
\log\frac{\pi^i_{n+1}(v)}{\pi^i_n(v)}.
\end{align*}

We first show that the second term is nonpositive. Fix $n\ge0$ and set
\[
b_n:=\pi_i(n+1)\frac{n+1}{\tau_i}.
\]
Since $\pi$ has full support, $b_n>0$. Then
\[
\sum_v \pi(n+1,v)\frac{n+1}{\tau_i}
\log\frac{\pi^i_{n+1}(v)}{\pi^i_n(v)}
=
b_n\sum_v \pi^i_{n+1}(v)
\log\frac{\pi^i_{n+1}(v)}{\pi^i_n(v)}
=
b_n\,D_{\mathrm{KL}}(\pi^i_{n+1}\|\pi^i_n)\ge0.
\]
Hence $B\le0$, and therefore
\begin{equation}\label{eq:dropB}
(\dot I)_{X_i}\le A.
\end{equation}

We next bound $A$ by applying the log-sum inequality level by level. Fix
$n\ge0$ and define
\[
b_v:=\pi(n,v)f_i(v)=\pi_i(n)\pi^i_n(v)f_i(v),
\qquad
a_v:=\pi_i(n)\pi^i_{n+1}(v)f_i(v).
\]
If $f_i(v)=0$, then $a_v=b_v=0$, and the corresponding term is interpreted as
zero. If $f_i(v)>0$, then
\[
\frac{a_v}{b_v}=\frac{\pi^i_{n+1}(v)}{\pi^i_n(v)}.
\]
Thus, writing
\[
\widehat A_n
:=
\sum_v \pi(n,v)f_i(v)\log\frac{\pi^i_{n+1}(v)}{\pi^i_n(v)}
=
\sum_v b_v\log\frac{a_v}{b_v},
\]
so that $A = \sum_{n \ge 0} \widehat A_n$, 
the log-sum inequality gives
\[
\widehat A_n
\le
\left(\sum_v b_v\right)
\log\frac{\sum_v a_v}{\sum_v b_v}.
\]
Since
\[
\sum_v b_v=\pi_i(n)m_i(n),
\qquad
\sum_v a_v=\pi_i(n)m_i(n+1),
\]
with each term positive by \eqref{eq:marginal-flux},
we obtain
\[
\widehat A_n
\le
\pi_i(n)m_i(n)\log\frac{m_i(n+1)}{m_i(n)}.
\]
Summing over $n$ gives
\begin{equation}\label{eq:A_bd_by_mi}
A\le
\sum_{n\ge0}\pi_i(n)m_i(n)\log\frac{m_i(n+1)}{m_i(n)}.
\end{equation}

We now estimate the right-hand side of \eqref{eq:A_bd_by_mi}. Multiplying and
dividing by $\mu_i$, we write
\[
\log\frac{m_i(n+1)}{m_i(n)}
=
\log\frac{m_i(n+1)}{\mu_i}
-
\log\frac{m_i(n)}{\mu_i}.
\]
Using $\log r\le r-1$ and $-\log r\le r^{-1}-1$ for $r>0$ gives
\begin{align*}
&\pi_i(n)m_i(n)
\left(
\log\frac{m_i(n+1)}{\mu_i}
-
\log\frac{m_i(n)}{\mu_i}
\right)\\
&\qquad\le
\pi_i(n)\left(\frac{m_i(n)m_i(n+1)}{\mu_i}-m_i(n)\right)
+
\pi_i(n)\big(\mu_i-m_i(n)\big).
\end{align*}
Summing over $n$ and using
\[
\sum_{n\ge0}\pi_i(n)m_i(n)=\mu_i
\]
yields
\[
A\le
\frac{1}{\mu_i}\sum_{n\ge0}\pi_i(n)m_i(n)m_i(n+1)-\mu_i.
\]
By the marginal flux balance identity \eqref{eq:marginal-flux},
\[
\pi_i(n)m_i(n)=\frac{n+1}{\tau_i}\pi_i(n+1),
\]
and hence
\begin{align*}
\sum_{n\ge0}\pi_i(n)m_i(n)m_i(n+1)
&=
\sum_{n\ge0}\frac{n+1}{\tau_i}\pi_i(n+1)m_i(n+1)\\
&=
\frac{1}{\tau_i}\sum_{n\ge1}n\,\pi_i(n)m_i(n)\\
&=
\frac{1}{\tau_i}\E_\pi[X_i\,m_i(X_i)].
\end{align*}
Therefore
\[
A\le
\frac{1}{\tau_i\mu_i}\E_\pi[X_i\,m_i(X_i)]-\mu_i.
\]
Combining this with \eqref{eq:dropB} gives
\[
(\dot I)_{X_i}
\le
\frac{1}{\tau_i\mu_i}\E_\pi[X_i\,m_i(X_i)]-\mu_i.
\]
Using $\E_\pi[X_i]=\tau_i\mu_i$ from part \textup{(1)} and
$\E_\pi[m_i(X_i)]=\mu_i$, the right-hand side becomes
\[
\frac{\Cov_\pi(X_i,m_i(X_i))}{\tau_i\mu_i}.
\]
Finally, since
\[
m_i(X_i)=\E_\pi[f_i(X_{-i})\mid X_i],
\]
we have
\[
\Cov_\pi(X_i,m_i(X_i))
=
\Cov_\pi\!\big(X_i,f_i(X_{-i})\big).
\]
Thus
\[
(\dot I)_{X_i}
\le
\frac{\Cov_\pi\!\big(f_i(X_{-i}),X_i\big)}{\tau_i\mu_i}.
\]
By the Fano--covariance identity from part \textup{(1)},
\[
F_{X_i}-1
=
\frac{\Cov_\pi\!\big(f_i(X_{-i}),X_i\big)}{\mu_i}.
\]
Therefore
\[
(\dot I)_{X_i}
\le
\frac{F_{X_i}-1}{\tau_i},
\]
which is exactly \eqref{eq:single_information_ineq_thm}.
\end{proof}

\subsection{Part \textup{(3)}: zero total information flow}

This cancellation is the rigorous generator version of the formal entropy
decomposition used by Ripsman, Kell, and Hilfinger in \cite{RKH2025}: the global logarithmic
potential is split into marginal and conditional pieces, the marginal
contribution vanishes by marginal flux balance, and the conditional
contributions are precisely the single information flows.

\begin{proof}[Proof of Theorem~\ref{thm:main} part \textup{(3)}]
Recall the logarithmic potentials from \eqref{eq:log_potentials}. For each
$i$,
\[
g(x)=g_i(x)+h_i(x),\qquad x\in S.
\]
By Lemma~\ref{lem:abs_int_Lg},
\[
Lg\in L^1(\pi)
\qquad\text{and}\qquad
\E_\pi[(Lg)(X)]=0.
\]

Recalling $L_i$ from \eqref{eq:Li_def}, we next show that
\[
\E_\pi[(L_i g_i)(X)]=0
\qquad\text{for each }i.
\]
Fix $i$. When writing $x=(n,v)$, we use the shorthand
$g_i(n):=-\log \pi_i(n)$. Since $\pi_i(n)\ge \pi(n,v)$, we have
\[
0\le g_i(n)\le g(n,v)
\qquad\text{for all }(n,v)\in S.
\]
Restricting the integrability bound from Lemma~\ref{lem:abs_int_Lg} to the
$i$--birth and $i$--death transitions therefore gives
\[
\sum_{n,v}\pi(n,v)f_i(v)|g_i(n+1)-g_i(n)|
+
\sum_{n\ge1,v}\pi(n,v)\frac{n}{\tau_i}|g_i(n-1)-g_i(n)|
<\infty.
\]
Thus $L_i g_i\in L^1(\pi)$, and the following termwise computation is justified:
\begin{align*}
\E_\pi[(L_i g_i)(X)]
&=
\sum_{n,v}\pi(n,v)f_i(v)\bigl(g_i(n+1)-g_i(n)\bigr)\\
&\quad+
\sum_{n\ge1,v}\pi(n,v)\frac{n}{\tau_i}
\bigl(g_i(n-1)-g_i(n)\bigr)\\
&=
\sum_{n\ge0}\pi_i(n)m_i(n)\bigl(g_i(n+1)-g_i(n)\bigr)\\
&\quad+
\sum_{n\ge1}\pi_i(n)\frac{n}{\tau_i}
\bigl(g_i(n-1)-g_i(n)\bigr).
\end{align*}
Reindexing the second sum by replacing $n$ with $n+1$ gives
\[
\sum_{n\ge1}\pi_i(n)\frac{n}{\tau_i}
\bigl(g_i(n-1)-g_i(n)\bigr)
=
\sum_{n\ge0}\pi_i(n+1)\frac{n+1}{\tau_i}
\bigl(g_i(n)-g_i(n+1)\bigr).
\]
Combining the two sums,
\[
\E_\pi[(L_i g_i)(X)]
=
\sum_{n\ge0}
\left(
\pi_i(n)m_i(n)-\frac{n+1}{\tau_i}\pi_i(n+1)
\right)
\bigl(g_i(n+1)-g_i(n)\bigr).
\]
By the marginal flux balance Lemma~\ref{lem:flux}, the coefficient in
parentheses is zero for every $n$. Hence
\[
\E_\pi[(L_i g_i)(X)]=0.
\]

Finally, since $L=\sum_{i=1}^N L_i$ and $g=g_i+h_i$ for each $i$ (from \eqref{eq:g_decomp}), we have the
pointwise identity
\[
Lg
= \sum_{i = 1}^N (L_i g) = 
\sum_{i=1}^N L_i g_i+\sum_{i=1}^N L_i h_i.
\]
All terms are integrable by the preceding paragraph,
Lemma~\ref{lem:abs_int_Lg}, and Lemma~\ref{lem:Idot_well_def_and_rep}. Taking
expectations gives
\[
0
=
\E_\pi[(Lg)(X)]
=
\sum_{i=1}^N \E_\pi[(L_i g_i)(X)]
+
\sum_{i=1}^N \E_\pi[(L_i h_i)(X)].
\]
The first sum is zero, and therefore
\[
\sum_{i=1}^N \E_\pi[(L_i h_i)(X)]=0.
\]
Using Lemma~\ref{lem:Idot_well_def_and_rep},
\[
(\dot I)_{X_i}=-\E_\pi[(L_i h_i)(X)],
\]
we conclude that
\[
\sum_{i=1}^N(\dot I)_{X_i}=0.\qedhere
\]
\end{proof}

\section{Termwise Bounds under Signed Monotonicity}
\label{sec:structural_balance}

The global tradeoff \eqref{eq:tradeoff_thm} shows that
\[
\sum_{i=1}^N \frac{F_{X_i}-1}{\tau_i}\ge 0.
\]
In this section we isolate a natural structural regime in which one has the
stronger \emph{termwise} conclusion
\[
F_{X_i}\ge 1
\qquad\text{for every }i.
\]
The hypothesis is most cleanly stated in terms of a partial order obtained by reversing the order on a subset of coordinates.
Structurally balanced signed interaction graphs provide the natural graph-theoretic realization of the hypothesis, but the theorem is stated directly in the order-theoretic language used in the proof.

\subsection{Signed ordering and our main assumption}
\label{sec:signed_ordering}

Fix a signature vector
\[
\sigma=(\sigma_1,\dots,\sigma_N)\in\{-1,1\}^N.
\]
Define a partial order $\preceq_\sigma$ on $S=\Z_{\ge0}^N$ by
\begin{equation}\label{eq:reflected_order}
x\preceq_\sigma y
\quad\Longleftrightarrow\quad
\sigma_i x_i\le \sigma_i y_i
\qquad\text{for all }i=1,\dots,N.
\end{equation}
Thus on coordinates with $\sigma_i=1$ this is the usual order, whereas on
coordinates with $\sigma_i=-1$ the order is reversed.

Our main structural hypothesis for this section is the following signed monotonicity condition.

\begin{assumption}[Signed monotonicity]\label{ass:signed_monotonicity}
There exists $\sigma\in\{-1,1\}^N$ such that for each $i\in\{1,\dots,N\}$,
whenever $x\preceq_\sigma y$, one has
\begin{equation}\label{eq:signed_monotonicity}
\sigma_i\, f_i(x_{-i}) \le \sigma_i\, f_i(y_{-i}).
\end{equation}
\end{assumption}

Equivalently, for each fixed $i$, the map $x\mapsto f_i(x_{-i})$ is
$\preceq_\sigma$--increasing when $\sigma_i=1$ and
$\preceq_\sigma$--decreasing when $\sigma_i=-1$.

We now describe a standard graph-theoretic sufficient condition for
Assumption~\ref{ass:signed_monotonicity}. Consider the signed interaction graph
with vertex set
\[
\{X_1,\dots,X_N\},
\]
where the vertex $X_i$ represents the $i$th species (equivalently, the counting process for that species).
For each ordered pair $j\neq i$, draw an edge $X_j\to X_i$ if and only if
$f_i$ depends nontrivially on $x_j$. Assume that whenever such a dependence is
present, the map
\[
x_j \mapsto f_i(x_{-i})
\]
is globally monotone when the remaining coordinates are held fixed. Label the
edge $X_j\to X_i$ by $+$ if this map is nondecreasing and by $-$ if it is
nonincreasing.

A signed interaction graph is called \emph{structurally balanced} if there
exists a partition of the vertex set into two classes,
\[
\{X_1,\dots,X_N\}=V_1\cup V_2,
\]
such that every positive edge has both endpoints in the same class, while every
negative edge has its endpoints in different classes.

\begin{proposition}[Graph criterion for signed monotonicity]
\label{prop:positive_parity_implies_signed_monotonicity}
Assume that for each ordered pair $j\neq i$ such that $f_i$ depends nontrivially on $x_j$, the map $x_j\mapsto f_i(x_{-i})$ is globally either nondecreasing or nonincreasing. Then the following are equivalent:
\begin{enumerate}
\item every cycle in the underlying signed graph, obtained by ignoring edge orientation, has positive parity, that is, contains an even number of negative edges;

    \item the signed interaction graph is structurally balanced;
    \item there exists $\sigma\in\{-1,1\}^N$ such that
    Assumption~\ref{ass:signed_monotonicity} holds.
\end{enumerate}
In particular, every structurally balanced interaction graph yields a
signature $\sigma$ for which \eqref{eq:signed_monotonicity} holds.
\end{proposition}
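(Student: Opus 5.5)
The plan is to prove the cycle of implications (1)$\Rightarrow$(2)$\Rightarrow$(3)$\Rightarrow$(1), with the translation between partitions and signatures given by $\sigma_i=1$ if $X_i\in V_1$ and $\sigma_i=-1$ if $X_i\in V_2$. Under this correspondence, structural balance says exactly that every edge $X_j\to X_i$ has sign $\sigma_i\sigma_j$.

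For (1)$\Rightarrow$(2), I would use the classical Harary argument. Work in each connected component of the underlying undirected signed graph. Fix a spanning tree and a root $r$ with $\sigma_r=1$. For each vertex $k$, let $\sigma_k$ be the product of the edge signs along the tree path from $r$ to $k$. Now take any non-tree edge $\{j,i\}$. It closes a cycle with the tree paths, and positive parity of that cycle forces the edge sign to equal $\sigma_i\sigma_j$. Tree edges satisfy this by construction. Antiparallel edges $X_j\to X_i$ and $X_i\to X_j$ form a $2$-cycle, so (1) forces them to have equal signs, and the undirected graph is well-signed.

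One point needs care: an edge may carry both labels if $f_i$ is constant in $x_j$, but then there is no edge at all, since the dependence is nontrivial by definition. A nontrivially dependent globally monotone map has exactly one sign, so the labels are unambiguous. Taking $V_1=\{\sigma=1\}$ gives (2).

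For (2)$\Rightarrow$(3), fix $i$ and $x\preceq_\sigma y$. I would pass from $x_{-i}$ to $y_{-i}$ one coordinate $j\neq i$ at a time, using the intermediate points $z^{(k)}$ that agree with $y$ in the first $k$ coordinates and with $x$ in the rest. At each step only $x_j$ changes, and the change satisfies $\sigma_j(y_j-x_j)\ge0$. If $f_i$ does not depend on $x_j$, the value is unchanged; this has to hold globally, which is the meaning of "no nontrivial dependence". Otherwise the edge sign is $\sigma_i\sigma_j$, and global monotonicity gives
\[
\sigma_i\sigma_j\bigl(f_i(\text{after})-f_i(\text{before})\bigr)\cdot\sigma_j(y_j-x_j)\ \ge 0 .
\]
Since $\sigma_j(y_j-x_j)\ge0$, this yields $\sigma_i\bigl(f_i(\text{after})-f_i(\text{before})\bigr)\ge0$. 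Chaining the steps gives \eqref{eq:signed_monotonicity}.

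For (3)$\Rightarrow$(1), I would first show that every edge $X_j\to X_i$ has sign $\sigma_i\sigma_j$. Since the dependence on $x_j$ is nontrivial, there exist points that differ only in coordinate $j$ where $f_i$ strictly changes. Order them so that $x\preceq_\sigma y$, possible since only coordinate $j$ differs. Signed monotonicity gives $\sigma_i f_i(x_{-i})\le\sigma_i f_i(y_{-i})$, and the strict change makes this inequality strict. This rules out the monotonicity direction opposite to $\sigma_i\sigma_j$, so the edge sign is $\sigma_i\sigma_j$. The product of edge signs around any undirected cycle then telescopes to $\prod\sigma_k^2=1$, which is positive parity.

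I expect the main obstacle to be the bookkeeping in (3)$\Rightarrow$(1) and in the handling of labels: the argument must use strictness to pin down the sign, and it must rely on the fact that a globally monotone map with nontrivial dependence cannot be both nondecreasing and nonincreasing. The final "in particular" clause is just (2)$\Rightarrow$(3).
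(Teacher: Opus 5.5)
Your argument is correct and is essentially the paper's proof. The paper also identifies the partition with $\sigma$, proves (2)$\Rightarrow$(3) by changing one coordinate $j\neq i$ at a time, and proves the converse by showing that each edge $X_j\to X_i$ must carry sign $\sigma_i\sigma_j$. The differences are in presentation. The paper cites Harary's theorem for (1)$\Leftrightarrow$(2), while you reprove the needed direction with a spanning-tree argument. You also make explicit two points the paper leaves implicit: that (3)$\Rightarrow$(1) needs two points where $f_i$ changes strictly, and that antiparallel edges count as $2$-cycles.

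One bookkeeping fix is needed in (2)$\Rightarrow$(3). Write $\Delta f=f_i(\text{after})-f_i(\text{before})$ and $\Delta x_j=y_j-x_j$. Your displayed inequality has a spurious factor $\sigma_j$. Monotonicity with edge sign $\sigma_i\sigma_j$ gives $\sigma_i\sigma_j\,\Delta f\,\Delta x_j\ge 0$, that is, $\sigma_i\Delta f\cdot\sigma_j\Delta x_j\ge 0$, and from this $\sigma_i\Delta f\ge 0$ follows. As written, your display reduces to $\sigma_i\,\Delta f\,\Delta x_j\ge 0$, which can fail when $\sigma_j=-1$. It would also only yield $\sigma_i\sigma_j\Delta f\ge 0$. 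Your stated conclusion is right once the display is corrected.
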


Here and below, \textit{cycle parity} is understood in the underlying signed graph obtained by forgetting edge orientation.

\begin{proof}
The equivalence of (1) and (2) follows from Harary's balance theorem for signed graphs
\cite{harary1953notion}.   We prove $(2)\Rightarrow(3)$ and $(3)\Rightarrow(2)$.

Assume (2). That is, we assume that the signed interaction graph is structurally balanced and we denote the classes by $V_1$ and $V_2$.  We must now construct a $\sigma\in \{-1,1\}^N$ such that Assumption \ref{ass:signed_monotonicity} holds.  Define $\sigma_i=1$ when $X_i\in V_1$ and $\sigma_i=-1$ when
$X_i\in V_2$. Fix $i$. If $X_j\to X_i$ is a positive edge, then $X_j$ and
$X_i$ lie in the same class, so $\sigma_j=\sigma_i$; if $X_j\to X_i$ is a
negative edge, then they lie in different classes, so $\sigma_j=-\sigma_i$.
Thus, for each $j$ on which $f_i$ depends, the map
$x_j\mapsto \sigma_i f_i(x_{-i})$ is nondecreasing when the remaining
coordinates are held fixed. Since this holds for every relevant $j$, it
follows by varying coordinates one at a time that
\[
x\preceq_\sigma y \implies \sigma_i f_i(x_{-i})\le \sigma_i f_i(y_{-i}),
\]
which is exactly \eqref{eq:signed_monotonicity}.

Conversely, assume (3), that there exists a $\sigma\in\{-1,1\}^N$ such that
Assumption~\ref{ass:signed_monotonicity} holds. We must show that the signed interaction graph is structurally balanced.  Let
\[
V_1:=\{X_i:\sigma_i=1\},\qquad V_2:=\{X_i:\sigma_i=-1\}.
\]
If $X_j\to X_i$ is a positive edge, then $f_i$ is nondecreasing in $x_j$.
Choose $x,y$ differing only in coordinate $j$, with $x\preceq_\sigma y$.
Then Assumption~\ref{ass:signed_monotonicity} is compatible with this positive
dependence only when $\sigma_j=\sigma_i$. Similarly, if $X_j\to X_i$ is a
negative edge, then $f_i$ is nonincreasing in $x_j$, and the same comparison
shows that Assumption~\ref{ass:signed_monotonicity} is possible only when
$\sigma_j=-\sigma_i$. Hence positive edges lie within classes and negative
edges go across classes, so the graph is structurally balanced.
\end{proof}

If the interaction graph is connected, which is the case we consider in this paper, then the choice of $\sigma$ is unique
up to global sign flip.   In particular, replacing $\sigma$ by $-\sigma$
reverses the order $\preceq_\sigma$ but leaves
Assumption~\ref{ass:signed_monotonicity} unchanged.

\begin{example}[A 3-node structurally balanced system]

Consider a system with $N=3$ variables and the following monotone dependencies:

\noindent
\begin{minipage}[t]{0.56\textwidth}
\vspace{0pt}
\begin{itemize}
    \item $x_1 \xrightarrow{-} f_2$: $X_1$ negatively affects $f_2$.
    \item $x_3 \xrightarrow{+} f_2$: $X_3$ positively affects $f_2$.
    \item $x_2 \xrightarrow{+} f_3$: $X_2$ positively affects $f_3$.
    \item $x_3 \xrightarrow{-} f_1$: $X_3$ negatively affects $f_1$.
\end{itemize}
\end{minipage}\hfill
\begin{minipage}[t]{0.38\textwidth}
\vspace{0pt}
\centering
\begin{tikzpicture}[
    node distance={2.3cm},
    main/.style = {draw, circle, thick},
    activation/.style = {-{Latex[length=2.5mm]}, thick, shorten >=2pt},
    inhibition/.style = {-{Bar[width=3mm]}, thick, shorten >=4pt}
]
\node[main] (1) {$X_1$};
\node[main] (2) [right of=1] {$X_2$};
\node[main] (3) [below of=2] {$X_3$};

\draw[inhibition, red] (1) -- node[midway, above] {$-$} (2);
\draw[activation, blue] ([xshift=-4pt]2.south) -- node[midway, left] {$+$} ([xshift=-4pt]3.north);
\draw[activation, blue] ([xshift=4pt]3.north) -- node[midway, right] {$+$} ([xshift=4pt]2.south);
\draw[inhibition, red] (3) -- node[midway, below left] {$-$} (1);
\end{tikzpicture}
\end{minipage}

\vspace{.1in}

By choosing the partition $V_1=\{X_1\}$ and $V_2=\{X_2,X_3\}$, we obtain the
signature vector $\sigma=(1,-1,-1)$. This configuration is valid because the
only negative interactions occur across classes (between $X_1$ and $\{X_2,X_3\}$),
while the positive interactions between $X_2$ and $X_3$ occur within the class $V_2$.

To illustrate \eqref{eq:signed_monotonicity}, consider $f_2$. Since
$\sigma=(1,-1,-1)$ and $\sigma_2=-1$, we must show that
\[
f_2(y_1,y_3)\le f_2(x_1,x_3)\tag{since $\sigma_2 = -1$}
\]
whenever $x_1\le y_1$ (since $\sigma_1=1$) and $y_3\le x_3$ (since $\sigma_3=-1$).
This follows immediately because $f_2$ is nonincreasing in $x_1$ and
nondecreasing in $x_3$, as indicated by the signs in the interaction graph above.\hfill $\triangle$
\end{example}

We next record a canonical example in the opposite direction: the repressilator,
introduced by Elowitz and Leibler \cite{elowitz2000synthetic}, whose interaction
graph is not structurally balanced and therefore does not satisfy the hypothesis
of Theorem~\ref{thm:local_bound_signed_monotonicity} below.

\begin{example}[The repressilator]
Consider the $3$-species repressilator, in which each species represses the next:

\noindent
\begin{minipage}[t]{0.56\textwidth}
\vspace{0pt}
\begin{itemize}
    \item $x_1 \xrightarrow{-} f_2$,
    \item $x_2 \xrightarrow{-} f_3$,
    \item $x_3 \xrightarrow{-} f_1$.
\end{itemize}
\end{minipage}\hfill
\begin{minipage}[t]{0.38\textwidth}
\vspace{0pt}
\centering
\begin{tikzpicture}[
    node distance={2.3cm},
    main/.style = {draw, circle, thick},
    inhibition/.style = {-{Bar[width=3mm]}, thick, shorten >=4pt}
]
\node[main] (1) {$X_1$};
\node[main] (2) [right of=1] {$X_2$};
\node[main] (3) [below of=2] {$X_3$};

\draw[inhibition, red] (1) -- node[midway, above] {$-$} (2);
\draw[inhibition, red] (2) -- node[midway, right] {$-$} (3);
\draw[inhibition, red] (3) -- node[midway, below left] {$-$} (1);
\end{tikzpicture}
\end{minipage}

\vspace{.1in}

Thus the signed interaction graph is a directed $3$-cycle with three negative
edges. In particular, the unique cycle has negative parity, since it contains
an odd number of negative edges. By
Proposition~\ref{prop:positive_parity_implies_signed_monotonicity}, the graph is
not structurally balanced, and there is no signature
$\sigma\in\{-1,1\}^N$ for which Assumption~\ref{ass:signed_monotonicity} holds.

Accordingly, the termwise conclusion of
Theorem~\ref{thm:local_bound_signed_monotonicity} below is not available from
the signed-monotonicity argument in this case. \hfill $\triangle$
\end{example}

We end this subsection by briefly connecting our setup to the theory of
\emph{monotone systems}. For a signature vector
$\sigma\in\{-1,1\}^N$, monotone-systems theory studies the orthant cone
\[
K_\sigma:=\{z\in\R^N:\sigma_i z_i\ge 0\text{ for all }i\},
\]
and the induced order
\[
x\preceq_\sigma y
\quad\Longleftrightarrow\quad
y-x\in K_\sigma
\quad\Longleftrightarrow\quad
\sigma_i x_i\le \sigma_i y_i
\text{ for all }i.
\]
Thus, for example, if $\sigma=(1,-1,-1)$, then
\[
x\preceq_\sigma y
\quad\Longleftrightarrow\quad
x_1\le y_1,\qquad x_2\ge y_2,\qquad x_3\ge y_3.
\]

In this language, Assumption~\ref{ass:signed_monotonicity} says precisely that,
for each $i$, the signed birth-rate function $\sigma_i f_i$ is increasing with
respect to the order $\preceq_\sigma$. This places us in the same
sign-structured framework used in the monotone-systems literature; see, for
example, Smith's general treatment of monotone dynamical systems and the
biochemical-network formulations of Angeli, De Leenheer, and Sontag
\cite{smith1995monotone,angeli2007monotone,de2007monotone}. For the present
unit-birth model, however, the point is not to invoke monotone-systems results
directly, but to adapt the same signed-order viewpoint to a stochastic setting.
Here the signature $\sigma$ identifies an order under which the dynamics are
order-preserving and the stationary law can be shown to be associated. This
stationary association is the key new step that yields the termwise bound
$F_{X_i}\ge 1$.

This perspective is also complementary to stochastic comparison results for
reaction networks, such as those of Campos et al.~\cite{campos2023comparison}.
There as well, sign and ordering information are used to obtain comparison
results for the dynamics, much as in the monotone-systems literature. Our goal
here is different: we use the signature $\sigma$ from
Assumption~\ref{ass:signed_monotonicity} to deduce a property of the stationary
distribution itself, namely association with respect to $\preceq_\sigma$.

\subsection{Main theorem, and proof outline,  for networks with signed monotonicity}

Proposition~\ref{prop:positive_parity_implies_signed_monotonicity} identifies a broad graph-theoretic regime in which Assumption~\ref{ass:signed_monotonicity} holds. We now show that Assumption~\ref{ass:signed_monotonicity} itself is sufficient to strengthen the global tradeoff to a termwise bound on each component.

\begin{theorem}[Termwise bound under signed monotonicity]
\label{thm:local_bound_signed_monotonicity}
Assume Assumption~\ref{ass:standing} and
Assumption~\ref{ass:signed_monotonicity}. Then, for every $i\in\{1,\dots,N\}$, we have
\[
\Cov_\pi\!\big(f_i(X_{-i}),X_i\big)\ge 0.
\]
In particular, for every $i\in\{1,\dots,N\}$,
\begin{equation}\label{eq:termwise_local_bound_signed}
F_{X_i}\ge 1.
\end{equation}
\end{theorem}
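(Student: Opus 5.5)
The plan is to prove that $\pi$ is associated with respect to $\preceq_\sigma$: for all bounded $\preceq_\sigma$-increasing $\phi,\psi$ on $S$, we want $\E_\pi[\phi\psi]\ge\E_\pi[\phi]\E_\pi[\psi]$. The termwise bound then follows quickly. The map $x\mapsto\sigma_i x_i$ is $\preceq_\sigma$-increasing, and by Assumption~\ref{ass:signed_monotonicity} so is $x\mapsto\sigma_i f_i(x_{-i})$. Their product is $\sigma_i^2 f_i x_i=f_i x_i$. Hence association gives $\Cov_\pi(\sigma_i f_i(X_{-i}),\sigma_i X_i)=\Cov_\pi(f_i(X_{-i}),X_i)\ge0$. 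The Fano--covariance identity \eqref{eq:fano_cov_identity_thm} of Theorem~\ref{thm:main} part (1), together with $\mu_i>0$ from \Atwo, then yields $F_{X_i}\ge1$.

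Since these functions are unbounded, I would first establish association for bounded increasing functions. I would then extend it by truncation, using $(-M)\vee(\cdot\wedge M)$, which preserves monotonicity. Passing $M\to\infty$ is justified by dominated convergence: $|f_i x_i|\le C_\Lambda(1+s)^2$ by \Athree, and this is integrable by \Aone.

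To prove association, I would use Harris' theorem (the Liggett/Cox form for Markov chains). Its two hypotheses are as follows.

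\textbf{(a) Monotonicity.} The chain must be attractive for $\preceq_\sigma$. To see this, build the standard coupling of two copies started at $x\preceq_\sigma y$, coordinate by coordinate. Take a coordinate with $\sigma_i=1$ where $x_i=y_i$. By assumption $f_i(x_{-i})\le f_i(y_{-i})$, so the births can be coupled so that $y$ jumps whenever $x$ does. The deaths have equal rates and are coupled identically. If $x_i<y_i$, single steps cannot break the order. For $\sigma_i=-1$ the roles are reversed. So the coupled process preserves $\preceq_\sigma$.

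\textbf{(b) Upward jumps only.} Every transition must connect comparable states. This holds because each jump is $\pm e_i$, and $x\pm e_i$ is always comparable to $x$.

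Harris' inequality then says the semigroup $P_t$ preserves the class of associated measures. Starting from a point mass, which is trivially associated, $\delta_x P_t$ is associated for every $t$. By positive recurrence and irreducibility (\Azero, with nonexplosion from Lemma~\ref{lem:nonexplosive_from_moments}), $\delta_xP_t\to\pi$ in total variation. Association is preserved under such limits for bounded functions, so $\pi$ is associated.

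The main obstacle is rigorously applying Harris' theorem on the infinite state space with unbounded rates, since standard statements assume bounded rates or a finite/compact setting. My first approach would be to truncate the state space to the finite box $\{s(x)\le K\}$, with births suppressed at the boundary. Suppressing a birth only lowers rates on the relevant side, so the truncated chain stays $\preceq_\sigma$-monotone provided the suppression is done in a monotone-compatible way. For example, cap each coordinate separately at $K$, which keeps the order structure and makes the state space a lattice. On this finite lattice, the finite-state Harris theorem gives association of the truncated stationary laws $\pi_K$. I would then show $\pi_K\to\pi$ weakly. Tightness of $\{\pi_K\}$ would come from the Foster--Lyapunov-type moment bounds. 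Alternatively, one can argue via the forward equations and uniqueness of the stationary distribution of the irreducible positive recurrent chain. Association passes to the limit for bounded increasing functions.

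If this convergence step proves delicate, a fallback is to work directly with the time-evolution argument in (a)--(b). There one would use the minimal-process construction and the nonexplosion already established to justify $\frac{d}{dt}P_t(\phi\psi)$ and the standard Harris computation, $L(\phi\psi)-\phi L\psi-\psi L\phi\ge0$ for increasing $\phi,\psi$, which holds because each jump is between comparable states.
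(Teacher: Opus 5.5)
Most of your plan matches the paper's proof. The paper also uses a split coupling to get $\preceq_\sigma$-monotonicity, with the same per-coordinate cap for the box chain. It uses the same fact that every jump connects comparable states, so $\Gamma(a,b)(z)=\sum_w q(z,w)(a(w)-a(z))(b(w)-b(z))\ge0$ for increasing $a,b$. It then proves association of $\delta_xP_t$, lets $t\to\infty$, and finishes by truncating $\sigma_if_i$ and $\sigma_iX_i$ and applying the Fano--covariance identity. The gap is in the step you flag yourself: moving Harris' inequality from the finite box to the infinite chain.

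Your primary route passes to the limit at the level of stationary laws, $\pi_K\to\pi$. The tightness of $\{\pi_K\}$ you invoke is not available. Assumption~\ref{ass:standing} has no drift or Lyapunov condition, only $\E_\pi[s(X)^2]<\infty$ for the \emph{untruncated} $\pi$, and that says nothing about $\pi_K$. The function $V=(1+s)^2$ from Theorem~\ref{thm:growth_f_implies_assumption} would work for the box chain, since suppressing births only lowers $L^{(K)}V$. However, it needs $B<1/\tau_{\max}$, which is not a hypothesis of this theorem.

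Your alternative, passing to the limit in the balance equations and using uniqueness of invariant measures, is not enough either. It only shows that subsequential pointwise limits of $\pi_K$ have the form $c\pi$ with $c\in[0,1]$. It does not exclude mass escaping to infinity, for example by piling up near the reflecting corner of the box. Bounded $\preceq_\sigma$-increasing $u,v$ can have nontrivial limits at infinity, so escaping mass changes $\lim\Cov_{\pi_K}(u,v)$ and association of $\pi$ does not follow.

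The paper avoids this by taking the limits in the other order, which is the structure of your own main paragraph. First run the Harris computation for $P_t^{(M)}(x,\cdot)$ at a \emph{fixed} time $t$. Set $H(s)=P^{(M)}_s\bigl[(P^{(M)}_{t-s}u)(P^{(M)}_{t-s}v)\bigr](x)$; then $H'(s)=P_s^{(M)}\Gamma_M(a_s,b_s)(x)\ge0$. Next let $M\to\infty$ at fixed $t$, running the box chain and the full chain with the same driving processes until the exit time $\tau_M$ from the box. Nonexplosion (Lemma~\ref{lem:nonexplosive_from_moments}) gives $\mathbb{P}_x(\tau_M\le t)\to0$. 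Hence $P_t^{(M)}\phi(x)\to P_t\phi(x)$ for every bounded $\phi$, with no uniform moment control on the truncations. Only after that is $t\to\infty$ taken.

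Your fallback, differentiating directly on the infinite space, could be made rigorous, but as written it skips the real issue. The product-rule step for $F_s=(P_{t-s}\phi)(P_{t-s}\psi)$ involves $LF_s$, which is bounded only by $2\|\phi\|_\infty\|\psi\|_\infty\Lambda$. You would need something like $\sup_{r\le t}\E_x[\Lambda(X_r)]<\infty$ together with a localization argument. That bound can be obtained from \Athree{} by dominating $s(X)$ with a linear birth process.
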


We make two observations about the implications of
Theorem~\ref{thm:local_bound_signed_monotonicity}. First, under
Assumption~\ref{ass:signed_monotonicity}, the global tradeoff
\[
\sum_{i=1}^N \frac{F_{X_i}-1}{\tau_i}\ge 0
\]
upgrades to the termwise conclusion $F_{X_i}\ge 1$ for every $i$. In
particular, no individual component can suppress its Fano factor below the
Poisson baseline in this signed-monotone regime. 
%The examples in Section~\ref{sec:signed_ordering} show that this hypothesis is genuinely restrictive: structurally balanced interaction graphs satisfy it, whereas the repressilator does not.

Second, we will call a signed interaction graph \emph{frustrated} if it is not
structurally balanced, equivalently, if the underlying signed graph contains a
cycle with negative parity. With this terminology, Theorem~\ref{thm:local_bound_signed_monotonicity}
implies that, within the class of networks whose interactions admit a global
sign description, frustration is a necessary condition for sub-Poissonian noise
in at least one component. In other words, structurally balanced networks
cannot achieve $F_{X_i}<1$, so any such noise suppression must come from a
frustrated interaction structure.

\subsubsection{Proof preparation and outline}

To prepare for the proof, let $(P_t)_{t\ge0}$ denote the Markov semigroup of
the chain:
\[
(P_t\phi)(x):=\E_x[\phi(X(t))],
\qquad x\in S,
\]
for bounded measurable $\phi:S\to\R$.

We will also use the following standard notion of positive dependence, adapted
here to the signed order $\preceq_\sigma$.

\begin{definition}[Association with respect to $\preceq_\sigma$]
A probability measure $\mu$ on $S$ is said to be \emph{associated} with respect
to $\preceq_\sigma$ if for all bounded $\preceq_\sigma$--increasing functions
$u,v:S\to\R$,
\[
\Cov_\mu(u,v)\ge 0.
\]
\end{definition}

This is the measure-theoretic version of the notion of association on a
partially ordered space studied by Lindqvist
\cite{lindqvist1988association}. It is also closely related to the classical
notion introduced by Esary, Proschan, and Walkup
\cite{esary1967association}, who define a random vector
$X=(X_1,\dots,X_n)$ to be associated if
\[
\Cov(f(X),g(X))\ge 0
\]
for all coordinatewise nondecreasing functions $f$ and $g$ for which the
covariance exists. This is equivalent to saying that the law $\mu$ of $X$ has
the property that
\[
\Cov_\mu(f,g)\ge 0
\]
for all coordinatewise nondecreasing test functions $f$ and $g$ for which the
covariance exists. In the present setting, we work with the same positive
dependence idea, but formulated directly for a probability measure on the
partially ordered state space $(S,\preceq_\sigma)$ and restricted to bounded
$\preceq_\sigma$--increasing test functions. This bounded formulation is
sufficient for the semigroup and truncation arguments used below.

The key point is that, under Assumption~\ref{ass:signed_monotonicity}, the
signed order $\preceq_\sigma$ is preserved by the dynamics strongly enough to
force association of the stationary law. Once this is established, the desired
covariance bound follows by applying association to the observables $X_i$ and
$f_i(X_{-i})$ (or their negatives when $\sigma_i=-1$).

The proof occupies the remainder of the section and proceeds in four steps:
\begin{enumerate}
\item We show that the semigroup $(P_t)_{t\ge0}$ is monotone with respect
to $\preceq_\sigma$, meaning that $P_t\phi$ is
$\preceq_\sigma$--increasing whenever $\phi$ is bounded and
$\preceq_\sigma$--increasing. We also prove the analogous monotonicity
statement for a finite-box truncation of the chain, which will be used in
Step 2.

    \item We prove that for every $x\in S$ and every $t\ge0$, the transition
    law $P_t(x,\cdot)$ is associated with respect to $\preceq_\sigma$.

\item We use stationarity and ergodicity to deduce that the stationary law
$\pi$ is itself associated with respect to $\preceq_\sigma$.

    \item We apply this association property to obtain
    \[
    \Cov_\pi\!\big(f_i(X_{-i}),X_i\big)\ge 0,
    \]
    and then conclude \eqref{eq:termwise_local_bound_signed} from the
    Fano--covariance identity established earlier.
\end{enumerate}

\subsection{Proof of Theorem~\ref{thm:local_bound_signed_monotonicity}}

We now prove Theorem~\ref{thm:local_bound_signed_monotonicity}. The proof is
organized according to the four steps outlined above. Step 1 establishes the
order-preserving property of the dynamics at the level of the semigroup. Step 2
uses this monotonicity, together with a finite-box approximation, to prove
association of the transition laws $P_t(x,\cdot)$. Step 3 passes from
association of transition laws to association of the stationary distribution
$\pi$. Step 4 applies stationary association to the observables
$f_i(X_{-i})$ and $X_i$, yielding the covariance bound in the theorem and hence
the termwise Fano bound.

\subsubsection{Step 1: Monotonicity of the semigroup}

\begin{proposition}[Monotonicity of the semigroup $P_t$]
\label{prop:monotone_semigroup}
Assume Assumption~\ref{ass:standing} and
Assumption~\ref{ass:signed_monotonicity}. Then, for every $t\ge0$, the
semigroup $P_t$ is monotone with respect to $\preceq_\sigma$: if
$\phi:S\to\R$ is bounded and $\preceq_\sigma$--increasing, then $P_t\phi$ is
also $\preceq_\sigma$--increasing.
\end{proposition}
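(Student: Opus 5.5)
I will prove monotonicity of $P_t$ by constructing an order-preserving (Markovian) coupling of two copies of the chain started from $x\preceq_\sigma y$. Once the coupling $(X^x(t),X^y(t))$ is in hand, with $X^x(t)\preceq_\sigma X^y(t)$ for all $t\ge0$ almost surely, the conclusion is immediate: for bounded $\preceq_\sigma$--increasing $\phi$,
\[
(P_t\phi)(x)=\E[\phi(X^x(t))]\le\E[\phi(X^y(t))]=(P_t\phi)(y).
\]
Nonexplosion (Lemma~\ref{lem:nonexplosive_from_moments}) guarantees that both marginals of the coupled process are defined for all $t\ge0$ and have the correct laws. This matters because the coupled generator has the same total-rate bounds, so it is nonexplosive as well.

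\textbf{Coupled generator.} On the set $\{(x,y):x\preceq_\sigma y\}$ I define, for each coordinate $i$ and each of the birth and death mechanisms, a basic coupling that shares the common rate. Consider first births with $\sigma_i=1$. Monotonicity gives $f_i(x_{-i})\le f_i(y_{-i})$. Both copies then jump by $+e_i$ at rate $f_i(x_{-i})$, and only the $y$ copy jumps at the excess rate $f_i(y_{-i})-f_i(x_{-i})$. Consider next births with $\sigma_i=-1$. Now $f_i(x_{-i})\ge f_i(y_{-i})$, so both copies jump at rate $f_i(y_{-i})$, and only the $x$ copy jumps at the excess rate. Deaths are handled the same way. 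If $\sigma_i=1$ then $x_i\le y_i$, so both copies die at rate $x_i/\tau_i$ and $y$ alone dies at rate $(y_i-x_i)/\tau_i$. If $\sigma_i=-1$ then $x_i\ge y_i$, so both die at rate $y_i/\tau_i$ and $x$ alone dies at the excess rate. The marginals are correct by construction.

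\textbf{Invariance of the order.} I will check that every coupled jump preserves $\preceq_\sigma$. Joint jumps trivially preserve it. A solo jump by the "upper" copy in an increasing coordinate (or by the "lower" copy in a reversed coordinate) only widens the gap, except in one case: when the coordinates are equal and the solo move would cross them. That crossing case cannot occur. For births, the solo move always moves in the direction that preserves the order, as above. For deaths with $x_i=y_i$, the excess rate is $0$. So the order is preserved along every jump.

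\textbf{Main obstacle.} The delicate point is infinitesimal-to-global: the coupled chain lives on a countable space with unbounded rates, so I must make sure that the coupled process is nonexplosive and that its marginals are genuinely versions of the original chain. My first attempt is to note that the coupled total rate is at most $\Lambda(x)+\Lambda(y)$, which is linear in $s(x)+s(y)$. Linear growth of the rates suffices for nonexplosion via the standard pathwise argument: the coupled process is dominated by a linear pure-birth process in $s(x)+s(y)$. Then a Dynkin/forward-equation uniqueness argument for the nonexplosive minimal chain identifies each marginal with $X$. If this route becomes awkward, the fallback is to do the coupling on the finite-box truncation (announced in the outline) and pass to the limit as the box grows, using $P_t^{(K)}\phi\to P_t\phi$ pointwise by nonexplosion. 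That fallback would still need a box truncation chosen so that monotonicity survives, for example by suppressing births at the boundary, which preserves the same case analysis.
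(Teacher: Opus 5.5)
Your proposal is correct and follows essentially the paper's route: the same split coupling (on ordered pairs your rates coincide with the paper's $\min$-based rates), a jump-by-jump check that $\preceq_\sigma$ is preserved, with signed monotonicity deciding which copy may take an unmatched birth and the excess death rate vanishing when the $i$th coordinates agree, and nonexplosion to extend from single jumps to all times. Two small differences: because you define the coupling only on ordered pairs, unmatched births are automatically order-preserving, whereas the paper separately treats the equal- and unequal-coordinate birth cases; and you are more explicit about nonexplosion of the coupled process and identification of its marginals, which the paper simply asserts.
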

\begin{proof}
Fix $x,y\in S$ with $x\preceq_\sigma y$. We first explain what we need to
construct. If there is a coupling of the chains started from $x$ and $y$ such
that
\[
X^x(t)\preceq_\sigma X^y(t)
\qquad\text{for all }t\ge 0
\]
almost surely, then for every bounded $\preceq_\sigma$--increasing function
$\phi$,
\[
\phi(X^x(t))\le \phi(X^y(t))
\qquad\text{almost surely}.
\]
Taking expectations gives
\[
(P_t\phi)(x)\le (P_t\phi)(y),
\]
and hence $P_t\phi$ is $\preceq_\sigma$--increasing. Thus it remains only to
construct such an order-preserving coupling.

We use the standard split coupling, written in random-time-change form \cite{anderson2012efficient}.   For each $i$, define
\begin{align*}
b_i^0(z,w)&:= \min\{ f_i(z_{-i}), f_i(w_{-i})\},\\
b_i^x(z,w)&:=f_i(z_{-i})- \min\{ f_i(z_{-i}), f_i(w_{-i})\},\\
b_i^y(z,w)&:=f_i(w_{-i})- \min\{ f_i(z_{-i}), f_i(w_{-i})\},
\end{align*}
and
\begin{align*}
d_i^0(z,w)&:=\min\{z_i/\tau_i, w_i/\tau_i\},\\
d_i^x(z,w)&:=\frac{z_i}{\tau_i} - \min\{z_i/\tau_i, w_i/\tau_i\},\\
d_i^y(z,w)&:=\frac{w_i}{\tau_i} - \min\{z_i/\tau_i, w_i/\tau_i\}.
\end{align*}
 Define $(X^x(t),X^y(t))$ by
\begin{align*}
X^x(t)
&=x+\sum_i
Y_i^{b,0}\!\left(\int_0^t b_i^0(X^x(s),X^y(s))\,ds\right)e_i
+\sum_i
Y_i^{b,x}\!\left(\int_0^t b_i^x(X^x(s),X^y(s))\,ds\right)e_i\\
&\quad-\sum_i
Y_i^{d,0}\!\left(\int_0^t d_i^0(X^x(s),X^y(s))\,ds\right)e_i
-\sum_i
Y_i^{d,x}\!\left(\int_0^t d_i^x(X^x(s),X^y(s))\,ds\right)e_i,
\\[1ex]
X^y(t)
&=y+\sum_i
Y_i^{b,0}\!\left(\int_0^t b_i^0(X^x(s),X^y(s))\,ds\right)e_i
+\sum_i
Y_i^{b,y}\!\left(\int_0^t b_i^y(X^x(s),X^y(s))\,ds\right)e_i\\
&\quad-\sum_i
Y_i^{d,0}\!\left(\int_0^t d_i^0(X^x(s),X^y(s))\,ds\right)e_i
-\sum_i
Y_i^{d,y}\!\left(\int_0^t d_i^y(X^x(s),X^y(s))\,ds\right)e_i,
\end{align*}
where the $e_i$ are the standard unit vectors in $\mathbb R^N$ and the $Y_i^*$ are independent unit-rate Poisson processes. Since
\[
b_i^0+b_i^x=f_i(z_{-i}),\qquad b_i^0+b_i^y=f_i(w_{-i}),
\]
and
\[
d_i^0+d_i^x=\frac{z_i}{\tau_i},\qquad
d_i^0+d_i^y=\frac{w_i}{\tau_i},
\]
the two marginals have the correct transition rates.

It remains to check that the coupling preserves the signed order. By
nonexplosion, it suffices to check this at a single jump time, which we denote
by $t$. Suppose that
\[
X^x(t-)\preceq_\sigma X^y(t-)
\]
just before the jump, and consider a jump in coordinate $i$.  We consider the various cases.

First, common births and common deaths preserve the order. Indeed, a common
birth adds $e_i$ to both chains, while a common death subtracts $e_i$ from both
chains; in either case the signed comparison between the two states is
unchanged.

Second, consider unmatched deaths. If $\sigma_i=1$, then
$X_i^x(t-)\le X_i^y(t-)$, so an unmatched death can occur only in the
$y$-chain. Since the state space is integer-valued, this moves the two
coordinates closer together without reversing the inequality. If
$\sigma_i=-1$, then $X_i^x(t-)\ge X_i^y(t-)$, so an unmatched death can occur
only in the $x$-chain, and again the two coordinates move closer together
without reversing the signed order.

Third, consider unmatched births when the two $i$th coordinates are not equal. If
$\sigma_i=1$, then $X_i^x(t-)<X_i^y(t-)$. An unmatched birth in the $x$-chain
gives
\[
X_i^x(t-)+1\le X_i^y(t-),
\]
while an unmatched birth in the $y$-chain only increases the upper coordinate.
If $\sigma_i=-1$, then $X_i^x(t-)>X_i^y(t-)$. An unmatched birth in the
$x$-chain only increases the upper coordinate in the signed order, while an
unmatched birth in the $y$-chain gives
\[
X_i^y(t-)+1\le X_i^x(t-).
\]
Thus unmatched births also preserve the signed order whenever the two
coordinates are not equal.

It remains only to consider unmatched births when
\[
X_i^x(t-)=X_i^y(t-).
\]
Here Assumption~\ref{ass:signed_monotonicity} rules out exactly the
order-violating unmatched birth. If $\sigma_i=1$, then
\[
f_i(X^x_{-i}(t-))\le f_i(X^y_{-i}(t-)),
\]
so $b_i^x(X^x(t-),X^y(t-))=0$ and an unmatched birth in the $x$-chain is
impossible. If $\sigma_i=-1$, then
\[
f_i(X^x_{-i}(t-))\ge f_i(X^y_{-i}(t-)),
\]
so $b_i^y(X^x(t-),X^y(t-))=0$ and an unmatched birth in the $y$-chain is
impossible. In both cases, the only possible unmatched birth is
order-preserving.

Therefore every jump of the coupled process preserves
$X^x\preceq_\sigma X^y$. Since the order holds at time $0$, it holds for all
$t\ge0$ almost surely. As observed at the start of the proof, this implies that
$P_t$ maps bounded $\preceq_\sigma$--increasing functions to bounded
$\preceq_\sigma$--increasing functions.
\end{proof}

For use in the next step, we also introduce the finite-box version of the
process. For $M\ge 1$, let
\[
\Lambda_M:=\{0,1,\dots,M\}^N,
\]
and let $L^{(M)}$ be the generator on $\Lambda_M$ obtained by suppressing births
at the boundary:
\begin{equation}\label{eq:finite_box_generator}
\begin{aligned}
L^{(M)}\phi(z)
&=
\sum_{i=1}^N f_i(z_{-i})\mathbf 1_{\{z_i<M\}}
\big(\phi(z+e_i)-\phi(z)\big)  + \sum_{i=1}^N \frac{z_i}{\tau_i}
\big(\phi(z-e_i)-\phi(z)\big),
\end{aligned}
\end{equation}
with the convention that the death term is zero when $z_i=0$. Let
$(P_t^{(M)})_{t\ge0}$ denote the corresponding finite-state semigroup.

\begin{corollary}[Finite-box monotonicity]
\label{cor:finite_box_monotonicity}
Assume Assumption~\ref{ass:standing} and
Assumption~\ref{ass:signed_monotonicity}. Then, for every $M\ge1$ and
every $t\ge0$, the semigroup $P_t^{(M)}$ is monotone with respect to
$\preceq_\sigma$: if $\phi:\Lambda_M\to\R$ is
$\preceq_\sigma$--increasing, then $P_t^{(M)}\phi$ is
$\preceq_\sigma$--increasing.
\end{corollary}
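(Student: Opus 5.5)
The plan is to repeat the split-coupling argument of Proposition~\ref{prop:monotone_semigroup} for the generator $L^{(M)}$ in \eqref{eq:finite_box_generator}, checking only the extra cases created by the boundary cutoff. Since $\Lambda_M$ is finite, nonexplosion is automatic and no moment assumptions are needed. For $z,w\in\Lambda_M$, define the coupled birth rates exactly as before, but with $f_i(z_{-i})$ replaced by $\tilde f_i(z):=f_i(z_{-i})\mathbf 1_{\{z_i<M\}}$. Take $b_i^0=\min\{\tilde f_i(z),\tilde f_i(w)\}$ and the unmatched parts as the differences; the death rates are unchanged. The random-time-change representation then gives a coupled pair $(X^{x,M},X^{y,M})$ on $\Lambda_M\times\Lambda_M$ whose marginals have generator $L^{(M)}$. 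It remains to show that each jump preserves $\preceq_\sigma$; then $\phi(X^{x,M}(t))\le\phi(X^{y,M}(t))$ almost surely, and taking expectations gives $(P^{(M)}_t\phi)(x)\le(P^{(M)}_t\phi)(y)$.

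The case analysis goes as follows. Common births and common deaths shift both chains by the same vector, so they preserve the order. Unmatched deaths, and unmatched births when the $i$th coordinates differ, preserve the order for the same integer-spacing reasons as before. One must also check that such moves stay inside the box. This holds because a birth only occurs where $\tilde f_i>0$, so only when the coordinate is below $M$.

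The main obstacle is the case $X^x_i=X^y_i=:k$. If $k=M$, then both $\tilde f_i$ vanish, so no birth in coordinate $i$ occurs at all. If $k<M$, the indicators equal $1$ for both chains, so $\tilde f_i$ agrees with $f_i$ in both. Assumption~\ref{ass:signed_monotonicity} then gives $\sigma_i f_i(X^x_{-i})\le\sigma_i f_i(X^y_{-i})$, because $X^x\preceq_\sigma X^y$ implies that the states with coordinate $i$ deleted are also ordered. This forces $b_i^x=0$ when $\sigma_i=1$ and $b_i^y=0$ when $\sigma_i=-1$, which rules out the order-violating unmatched birth exactly as in Proposition~\ref{prop:monotone_semigroup}.

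The key point is that the cutoff indicator depends only on the coordinate being born. It therefore never breaks the comparison of $f_i$ at tied coordinates. When the coordinates are not tied, the cutoff can only suppress a birth, and suppressing a birth in either chain cannot reverse a strict integer inequality.

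Since every jump preserves $\preceq_\sigma$ and the order holds at time $0$, it holds for all $t\ge0$. Hence $P_t^{(M)}\phi$ is $\preceq_\sigma$--increasing for every $\preceq_\sigma$--increasing $\phi$; boundedness is automatic on the finite set $\Lambda_M$.
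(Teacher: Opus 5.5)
Your proposal is correct and is essentially the paper's proof. You rerun the split coupling of Proposition~\ref{prop:monotone_semigroup} with birth rates $f_i(z_{-i})\mathbf 1_{\{z_i<M\}}$ and note that the boundary indicators coincide when the $i$th coordinates are tied, so Assumption~\ref{ass:signed_monotonicity} handles that case as before; when the coordinates differ, the cutoff only suppresses births, which cannot reverse a strict integer inequality (your explicit checks that jumps stay in $\Lambda_M$ and that nonexplosion is automatic are details the paper leaves implicit).
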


\begin{proof}
The proof is the same split-coupling argument as above, with the birth rates
replaced by
\[
f_i(z_{-i})\mathbf 1_{\{z_i<M\}}.
\]
The boundary factor causes no new difficulty. If the two $i$th coordinates are
equal, then the boundary indicators are equal, so the equal-coordinate case is
exactly the same as for the full chain. If the two $i$th coordinates are
unequal, then any allowed unmatched birth is order-preserving for the same
integer-valued reason as above, while suppressing a boundary birth only removes
a possible jump. Thus the finite-box coupling also preserves
$\preceq_\sigma$, and the monotonicity of $P_t^{(M)}$ follows.
\end{proof}

\subsubsection{Step 2: Association of the transition laws}

We next prove the association statement for the transition laws.

\begin{proposition}[Association of the transition laws]
\label{prop:transition_law_associated_signed}
Assume Assumption~\ref{ass:standing} and
Assumption~\ref{ass:signed_monotonicity}. Then, for every $x\in S$ and every
$t\ge 0$, the transition law $P_t(x,\cdot)$ is associated with respect to
$\preceq_\sigma$.
\end{proposition}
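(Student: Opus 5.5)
Our plan is to follow the classical Harris--Liggett route: first prove association of $P^{(M)}_t(z,\cdot)$ for the finite-box chain, then pass to the limit $M\to\infty$ using the fact that the full chain is nonexplosive. For the finite box, we will use Harris's criterion for finite-state Markov chains. The transition laws of a chain on a finite distributive lattice are associated for every starting point provided two conditions hold: (i) the semigroup is monotone, which is Corollary~\ref{cor:finite_box_monotonicity}, and (ii) every jump of the chain connects $\preceq_\sigma$-comparable states. Condition (ii) holds trivially here, since every transition is $z\to z\pm e_i$ and is therefore comparable in $\preceq_\sigma$. To make the argument self-contained, we will reproduce the short Liggett proof. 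For bounded $\preceq_\sigma$-increasing $u,v$, set
\[
F(s):=P^{(M)}_s\big((P^{(M)}_{t-s}u)(P^{(M)}_{t-s}v)\big)(z),
\]
so that $F(t)-F(0)=P_t^{(M)}(uv)(z)-P_t^{(M)}u(z)\,P_t^{(M)}v(z)$. Differentiating gives
\[
F'(s)=P^{(M)}_s\Gamma(P^{(M)}_{t-s}u,P^{(M)}_{t-s}v)(z),
\]
where $\Gamma(a,b)=L^{(M)}(ab)-aL^{(M)}b-bL^{(M)}a$. For a nearest-neighbor chain this is $\Gamma(a,b)(w)=\sum_{w'}q(w,w')(a(w')-a(w))(b(w')-b(w))$. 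Each term is nonnegative, because $w$ and $w'$ are comparable and $a$, $b$ are increasing by Step~1. Integrating over $s\in[0,t]$ yields $\Cov_{P_t^{(M)}(z,\cdot)}(u,v)\ge0$. All of these steps are finite-dimensional matrix calculus, so nothing needs separate justification.

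Next we pass from the box to $S$. Fix $x$ and $t$, and take $M>\max_i x_i$. We couple the two chains through the same Poisson processes in random-time-change form, so that they agree up to the exit time $T_M$, the first time some coordinate reaches $M$. (The box chain suppresses births only at $z_i=M$, so it coincides with the full chain until the full chain first hits level $M$ in some coordinate.) Then for bounded $\phi$,
\[
|P_t\phi(x)-P^{(M)}_t\phi(x)|\le 2\|\phi\|_\infty P_x(T_M\le t).
\]
By nonexplosion (Lemma~\ref{lem:nonexplosive_from_moments}), $P_x(T_M\le t)\to0$ as $M\to\infty$, because $\sup_{s\le t}s(X(s))<\infty$ almost surely.

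Then we apply this to a bounded $\preceq_\sigma$-increasing $u$ and $v$ on $S$. Their restrictions to $\Lambda_M$ remain increasing for the restricted order, so $P^{(M)}_t(uv)(x)\ge P^{(M)}_tu(x)\,P^{(M)}_tv(x)$. Applying the uniform approximation above to $u$, $v$ and $uv$, and letting $M\to\infty$, gives $\Cov_{P_t(x,\cdot)}(u,v)\ge0$.

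We expect the main obstacle to be the finite-box step. Specifically, we must check that the carré du champ computation is legitimate with the boundary-suppressed rates, and that monotonicity of $P^{(M)}_{t-s}u$ is used with respect to the restricted order on $\Lambda_M$. Both points are covered by Corollary~\ref{cor:finite_box_monotonicity} together with nearest-neighbor comparability. A secondary point is that the approximation argument needs only nonexplosion; no moment bounds are required.
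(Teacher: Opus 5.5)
Your proposal is correct and follows essentially the same route as the paper. Both arguments prove the finite-box inequality via $H'(s)=P_s^{(M)}\Gamma_M(a_s,b_s)(x)\ge 0$, using Corollary~\ref{cor:finite_box_monotonicity} and nearest-neighbor comparability, and then couple the box and full chains up to exit, letting $M\to\infty$ by nonexplosion. Your explicit bound $2\|\phi\|_\infty P_x(T_M\le t)$ and your use of the earlier hitting time of level $M$ are harmless variants of the paper's bounded-convergence step.
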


\begin{proof}
Fix $x\in S$, $t\ge0$, and bounded $\preceq_\sigma$--increasing functions
$u,v:S\to\R$. To prove that $P_t(x,\cdot)$ is associated with respect to
$\preceq_\sigma$, it suffices to show that
\[
P_t(uv)(x)
\ge
(P_tu)(x)(P_tv)(x),
\]
since this inequality is exactly
\[
\Cov_{P_t(x,\cdot)}(u,v)\ge0.
\]

We first prove a finite-box version of this inequality. Fix $M$ large enough
that $x\in\Lambda_M$. We work with the finite-state semigroup
$(P_t^{(M)})_{t\ge0}$ generated by $L^{(M)}$ in
\eqref{eq:finite_box_generator}. Throughout this finite-box argument, when
$P_t^{(M)}$ is applied to a function originally defined on $S$, we mean its
restriction to $\Lambda_M$.

For functions $a,b:\Lambda_M\to\R$, define
\[
\Gamma_M(a,b)
:=
L^{(M)}(ab)-aL^{(M)}b-bL^{(M)}a.
\]
Writing
\[
L^{(M)}h(z)=\sum_{w\neq z} q_M(z,w)\big(h(w)-h(z)\big),
\]
where $q_M(z,w)$ are the transition rates of the finite-box chain, we obtain
\begin{align*}
\Gamma_M(a,b)(z)
&=
\big(L^{(M)}(ab)\big)(z)
-a(z)\big(L^{(M)}b\big)(z)
-b(z)\big(L^{(M)}a\big)(z)\\
&=
\sum_{w\neq z} q_M(z,w)
\Big[
a(w)b(w)-a(z)b(z)  \\
&\hspace{3.5cm}
-a(z)\big(b(w)-b(z)\big)
-b(z)\big(a(w)-a(z)\big)
\Big]  \\
&=
\sum_{w\neq z} q_M(z,w)
\big(a(w)-a(z)\big)\big(b(w)-b(z)\big).
\end{align*}

For $0\le s \le t$, set
\[
a_s:=P_{t-s}^{(M)}u,
\qquad
b_s:=P_{t-s}^{(M)}v,
\qquad
F_s:=a_s b_s.
\]
Note that $a_t = u$ and $b_t = v$.  Define
\[
H(s):=P_s^{(M)}F_s(x)
=
P_s^{(M)}
\Big[
(P_{t-s}^{(M)}u)(P_{t-s}^{(M)}v)
\Big](x).
\]
Since $\Lambda_M$ is finite, the forward and backward equations hold without
domain issues. In particular, for any differentiable family
$F_s:\Lambda_M\to\R$,
\[
\frac{d}{ds}P_s^{(M)}F_s
=
P_s^{(M)}L^{(M)}F_s+P_s^{(M)}\dot F_s.
\]
Also,
\[
\dot a_s=-L^{(M)}a_s,
\qquad
\dot b_s=-L^{(M)}b_s,
\]
and hence
\[
\dot F_s=-(L^{(M)}a_s)b_s-a_s(L^{(M)}b_s).
\]
Applying the preceding differentiation identity to $H(s)=P_s^{(M)}F_s(x)$ gives
\begin{align*}
H'(s)
&=
P_s^{(M)}
\Big[
L^{(M)}(a_s b_s)
-(L^{(M)}a_s)b_s
-a_s(L^{(M)}b_s)
\Big](x)\\
&=
P_s^{(M)}\Gamma_M(a_s,b_s)(x).
\end{align*}

By Corollary~\ref{cor:finite_box_monotonicity}, $P_r^{(M)}$ is monotone with
respect to $\preceq_\sigma$ for every $r\ge0$. Therefore $a_s$ and $b_s$ are
$\preceq_\sigma$--increasing. If $q_M(z,w)>0$, then $w$ is obtained from $z$
by changing a single coordinate by $+1$ or $-1$, so $z$ and $w$ are comparable
under $\preceq_\sigma$. It follows that
\[
\big(a_s(w)-a_s(z)\big)\big(b_s(w)-b_s(z)\big)\ge 0
\]
for every allowed jump $z\to w$. Hence
\[
\Gamma_M(a_s,b_s)(z)\ge0
\qquad\text{for every }z\in\Lambda_M.
\]
Since $P_s^{(M)}$ preserves nonnegative functions, we have $H'(s)\ge0$ for
all $s\in[0,t]$. Thus $H(t)\ge H(0)$, that is,
\[
P_t^{(M)}(uv)(x)
\ge
(P_t^{(M)}u)(x)(P_t^{(M)}v)(x),
\]
where we utilized that  $H(t)=P_t^{(M)}(uv)(x)$  while
$H(0)=(P_t^{(M)}u)(x)(P_t^{(M)}v)(x)$.

We now pass to the full chain. Let $\tau_M$ be the first time the full chain,
started from $x$, exits $\Lambda_M$. Couple the full chain, denoted $X$, and the finite-box
chain, denoted $X^{(M)}$, also started from $x$, using the same driving processes up to time
$\tau_M$. On the event $\{\tau_M>t\}$, the two chains agree at time $t$.

By nonexplosion, the full chain makes only finitely many jumps on $[0,t]$
almost surely. Hence, for each fixed sample path, $\tau_M>t$ for all sufficiently
large $M$, and therefore
\[
\mathbb P_x(\tau_M\le t)\to 0
\qquad\text{as }M\to\infty.
\]
Therefore, under this coupling, $X^{(M)}(t)\to X(t)$ almost surely as $M\to \infty$. Since $u$, $v$, and
$uv$ are bounded, bounded convergence gives
\[
P_t^{(M)}u(x)\to P_tu(x),
\qquad
P_t^{(M)}v(x)\to P_tv(x),
\qquad
P_t^{(M)}(uv)(x)\to P_t(uv)(x),
\]
as $M\to \infty$.  Hence, letting $M\to\infty$ in the finite-box inequality gives
\[
P_t(uv)(x)
\ge
(P_tu)(x)(P_tv)(x).
\]
As noted at the start of the proof, this is precisely association of
$P_t(x,\cdot)$ with respect to $\preceq_\sigma$.
\end{proof}

\subsubsection{Step 3: Association of the stationary law}

We now pass from association of the transition laws to association of the
stationary distribution.

\begin{proposition}[Association of the stationary law]
\label{prop:stationary_law_associated_signed}
Assume Assumption~\ref{ass:standing} and
Assumption~\ref{ass:signed_monotonicity}. Then the stationary law $\pi$ is
associated with respect to $\preceq_\sigma$.
\end{proposition}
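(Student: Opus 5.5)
The plan is to obtain association of $\pi$ as a limit of the associated transition laws $P_t(x,\cdot)$ from Proposition~\ref{prop:transition_law_associated_signed}, using ergodicity of the chain. Fix a state $x\in S$, say $x=0$, and bounded $\preceq_\sigma$--increasing functions $u,v:S\to\R$. For every $t\ge0$, Proposition~\ref{prop:transition_law_associated_signed} gives
\[
(P_t(uv))(x)\ \ge\ (P_tu)(x)\,(P_tv)(x).
\]
We then let $t\to\infty$ on both sides. If $P_t(x,\cdot)\to\pi$ in total variation, or even just weakly, then each term converges because $u$, $v$, and $uv$ are bounded:
\[
(P_t\phi)(x)=\sum_y P_t(x,y)\phi(y)\to\sum_y\pi(y)\phi(y)
\]
for $\phi\in\{u,v,uv\}$. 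Passing to the limit yields $\E_\pi[uv]\ge\E_\pi[u]\E_\pi[v]$, which is exactly $\Cov_\pi(u,v)\ge0$.

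The main step is to justify the convergence $P_t(x,\cdot)\to\pi$. By \Azero\ the chain is irreducible on the countable space $S$ and admits the stationary distribution $\pi$. By Lemma~\ref{lem:nonexplosive_from_moments} it is nonexplosive, so the semigroup $P_t$ is the honest (minimal, non-defective) transition function. For an irreducible, nonexplosive continuous-time chain with an invariant probability distribution, the chain is positive recurrent and $\pi$ is its unique stationary law. The standard convergence theorem for continuous-time chains, for instance Norris~\cite[Theorem~3.6.2]{norris1998markov}, then gives $P_t(x,y)\to\pi(y)$ for every $y$. No aperiodicity issue arises in continuous time, because $P_t(x,y)>0$ for all $t>0$ by irreducibility.

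Pointwise convergence of $P_t(x,y)$ to $\pi(y)$, with both sides probability distributions, upgrades to total variation convergence by Scheffé's lemma. This justifies exchanging the limit with the sums against the bounded functions.

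The only delicate point is confirming that the existence of a stationary distribution together with nonexplosion really yields positive recurrence, so that the ergodic theorem applies. I would cite Norris~\cite[Theorem~3.5.3]{norris1998markov}: for an irreducible, nonexplosive chain, having an invariant distribution is equivalent to positive recurrence. Everything else is a direct limit argument.
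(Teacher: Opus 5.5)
Your proof is correct and follows essentially the same route as the paper: association of $P_t(x,\cdot)$ from Proposition~\ref{prop:transition_law_associated_signed}, then the ergodic limit $P_t(x,\cdot)\to\pi$, which you justify more carefully than the paper does via nonexplosion and Norris's convergence theorem. The only difference is cosmetic: the paper integrates the inequality against $\pi$, uses stationarity to identify the left side as $\E_\pi[uv]$, and then applies bounded convergence in $x$, whereas you fix one starting state and pass to the limit in all three terms, which is equally valid.
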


\begin{proof}
Let $u,v:S\to\R$ be bounded and $\preceq_\sigma$--increasing. By
Proposition~\ref{prop:transition_law_associated_signed}, for every $x\in S$
and every $t\ge0$,
\[
P_t(uv)(x)\ge (P_tu)(x)(P_tv)(x).
\]
Integrating with respect to $\pi$ gives
\[
\sum_{x\in S}\pi(x)P_t(uv)(x)
\ge
\sum_{x\in S}\pi(x)(P_tu)(x)(P_tv)(x).
\]
By stationarity of $\pi$, the left-hand side is
\[
\sum_{x\in S}\pi(x)P_t(uv)(x)=\E_\pi[u(X)v(X)],
\]
where $X\sim \pi$.
Thus
\[
\E_\pi[u(X)v(X)]
\ge
\sum_{x\in S}\pi(x)(P_tu)(x)(P_tv)(x).
\]

Since the chain is irreducible and positive recurrent under the standing
hypotheses, the ergodic theorem for countable-state continuous-time Markov
chains gives
\[
P_tg(x)\to \E_\pi[g(X)]
\qquad\text{as }t\to\infty
\]
for every bounded $g:S\to\R$.
 Applying this to $u$ and $v$, and using bounded convergence
with respect to $\pi$, we obtain
\[
\sum_{x\in S}\pi(x)(P_tu)(x)(P_tv)(x)
\to
\E_\pi[u(X)]\,\E_\pi[v(X)].
\]
Letting $t\to\infty$ therefore yields
\[
\E_\pi[u(X)v(X)]\ge \E_\pi[u(X)]\,\E_\pi[v(X)].
\]
Since this holds for all bounded $\preceq_\sigma$--increasing $u$ and $v$,
$\pi$ is associated with respect to $\preceq_\sigma$.
\end{proof}

\subsubsection{Step 4: The covariance bound and conclusion}

We now finish the proof of Theorem~\ref{thm:local_bound_signed_monotonicity}.

\begin{proof}[Proof of Theorem~\ref{thm:local_bound_signed_monotonicity}]
Fix $i\in\{1,\dots,N\}$. Let $X\sim\pi$. We must show that
\[
\Cov_\pi\!\big(f_i(X_{-i}),X_i\big)\ge 0.
\]

For $M\ge1$, define the bounded truncations
\[
U_M(x):=f_i(x_{-i})\wedge M,
\qquad
V_M(x):=x_i\wedge M.
\]
We claim that
\begin{equation}\label{eq:truncated_cov_nonnegative_signed}
\Cov_\pi\!\big(U_M(X),V_M(X)\big)\ge0
\qquad\text{for every }M\ge1.
\end{equation}

Suppose first that $\sigma_i=1$. Then the coordinate map $x\mapsto x_i$ is
$\preceq_\sigma$--increasing, and Assumption~\ref{ass:signed_monotonicity}
implies that $x\mapsto f_i(x_{-i})$ is also $\preceq_\sigma$--increasing.
Since $z\mapsto z\wedge M$ is nondecreasing, both $U_M$ and $V_M$ are bounded
$\preceq_\sigma$--increasing functions. Hence
\eqref{eq:truncated_cov_nonnegative_signed} follows from
Proposition~\ref{prop:stationary_law_associated_signed}.

Suppose now that $\sigma_i=-1$. Then both $x\mapsto x_i$ and
$x\mapsto f_i(x_{-i})$ are $\preceq_\sigma$--decreasing. Therefore
$-U_M$ and $-V_M$ are bounded $\preceq_\sigma$--increasing functions. Applying
Proposition~\ref{prop:stationary_law_associated_signed} gives
\[
\Cov_\pi\!\big(-U_M(X),-V_M(X)\big)\ge0.
\]
Since
\[
\Cov_\pi\!\big(-U_M(X),-V_M(X)\big)
=
\Cov_\pi\!\big(U_M(X),V_M(X)\big),
\]
\eqref{eq:truncated_cov_nonnegative_signed} holds in this case as well.

It remains to let $M\to\infty$. By Assumption~\ref{ass:standing}, in particular
\Aone and \Athree, we have
\[
\E_\pi[f_i(X_{-i})^2]<\infty,
\qquad
\E_\pi[X_i^2]<\infty.
\]
Here we use $f_i(x_{-i})\le \Lambda(x)$ and the at-most-linear growth bound
on $\Lambda$.
Thus $f_i(X_{-i})X_i$ is integrable by Cauchy--Schwarz. Since
\[
U_M(X)\nearrow f_i(X_{-i}),
\qquad
V_M(X)\nearrow X_i,
\qquad
U_M(X)V_M(X)\nearrow f_i(X_{-i})X_i,
\]
the monotone convergence theorem gives
\[
\E_\pi[U_M(X)]\to \E_\pi[f_i(X_{-i})],
\qquad
\E_\pi[V_M(X)]\to \E_\pi[X_i],
\]
and
\[
\E_\pi[U_M(X)V_M(X)]
\to
\E_\pi[f_i(X_{-i})X_i].
\]
Therefore, letting $M\to\infty$ in
\eqref{eq:truncated_cov_nonnegative_signed}, we obtain
\[
\Cov_\pi\!\big(f_i(X_{-i}),X_i\big)\ge0.
\]

Finally, by the Fano--covariance identity \eqref{eq:fano_cov_identity_thm},
\[
F_{X_i}
=
1+
\frac{\Cov_\pi\!\big(f_i(X_{-i}),X_i\big)}{\mu_i}.
\]
Since $\mu_i\in(0,\infty)$ by Assumption~\ref{ass:standing}, the covariance
bound implies
\[
F_{X_i}\ge1.
\]
This proves the theorem.
\end{proof}

\bibliographystyle{plain}
\bibliography{references_Paulsson}

\end{document}